\newtheorem{thm}{Theorem}
\newtheorem{defn}{Definition}
\newtheorem{lemma}{Lemma}
\newtheorem{pro}{Proposition}
\newtheorem{rk}{Remark}
\numberwithin{equation}{section} \setcounter{tocdepth}{1}
\newcommand{\M}{{\mathcal M}}
\newcommand{\bea}{\begin{eqnarray}}
\newcommand{\eea}{\end{eqnarray}}
\newcommand{\Q}{\mathbb{Q}}
\newcommand{\R}{\mathbb{R}}
\def\M{\mathcal M}
\def\R{\mathbb{R}}
\begin{document}
\title [Quadratic stochastic processes]
{Quadratic stochastic processes of type $(\sigma|\mu)$}

\author {B.J. Mamurov, \, U.A. Rozikov, \, S.S. Xudayarov}

\address{B.\ J. \ Mamurov\\ Bukhara State University, The department of Mathematics,
 11, M.Iqbol,  Bukhara, Uzbekistan.}
\email {bmamurov.51@mail.ru}

 \address{U.A.~Rozikov \\ Institute of Mathematics, 81, Mirzo Ulug'bek str., 100125,
Tashkent, Uzbekistan.} \email {rozikovu@yandex.ru}

\address{S.\  \ Xudayarov\\ Bukhara State University, The department of Mathematics,
11, M.Iqbol, Bukhara city.\, Bukhara, Uzbekistan.}
\email {xsanat83@mail.ru}

\begin{abstract} We construct quadratic stochastic processes (QSP) (also known as Markov processes of cubic matrices)
in continuous and discrete times. These are dynamical systems given by (a fixed type,
called $\sigma$) stochastic cubic matrices
satisfying an analogue of Kolmogorov-Chapman equation (KCE) with respect to
a fixed  multiplications (called $\mu$) between
cubic matrices. The existence of a stochastic (at each time)
solution to the KCE provides
the existence of a QSP called a QSP of type $(\sigma | \mu)$.

In this paper, our aim is to construct and study trajectories of QSPs
for specially chosen notions of stochastic cubic matrices
and a wide class of multiplications of such matrices (known as Maksimov's multiplications).
\end{abstract}

\subjclass[2010] {17D92; 17D99; 60J27}

\keywords{quadratic stochastic dynamics; cubic matrix; time; Kolmogorov-Chapman
equation}

\maketitle

\section{Introduction}

%
The Kolmogorov-Chapman equation (KCE) gives the fundamental
relationship between the probability transitions (kernels).
Namely, it is known that (see e.g. \cite{SK}) if
each element of a family of matrices satisfying the KCE
is stochastic, then it generates a Markov process.
%
%
%
%
In this paper following \cite{CLRm} we study Markov process of cubic matrices,
which is a two-parametric family of
cubic stochastic matrices (we fix a notion of stochastic
 matrix and fix a multiplication rule of cubic matrices) satisfying the KCE.
 The main question of this study is to describe dynamics of the
 process given by cubic matrices. This question is very important in the theory
 of dynamical systems to know future evolution of the system.

%
%
%

Let us give necessary definitions and facts.


\subsection{Maksimov's cubic stochastic matrices}
Denote $I=\{1,2,\dots,m\}$. Let $\mathfrak C$ be the set of all $m^3$-dimensional cubic matrices over the field of real numbers \cite{LRl}.
Denote by $E_{ijk}$, $i,j,k\in I$ the basis cubic matrices in $\mathfrak C$.

Following  \cite{Mak} define the following multiplications for basis matrices $E_{ijk}$:
\begin{equation}\label{6ma0}
 E_{ijk}*_0 E_{lnr}=\delta_{kl}\delta_{jn}E_{ijr},
 \end{equation}
 where $\delta_{kl}$ is the Kronecker symbol.

Then for any two cubic matrices $A=(a_{ijk}), B=(b_{ijk})\in \mathfrak C$
 the matrix $A*_0 B=(c_{ijk})$ is defined by
 \begin{equation}\label{6AB0}
 c_{ijr}=\sum_{k=1}^m a_{ijk}b_{kjr}.
 \end{equation}

 The following results of this section are proven in \cite{Mak} (see also \cite{Rpd} for detailed proofs)

\begin{pro} The algebra of cubic matrices $(\mathfrak C, *_0)$
is a direct sum of algebras of square matrices.
\end{pro}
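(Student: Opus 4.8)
The plan is to exhibit an explicit algebra isomorphism from $(\mathfrak C,*_0)$ onto a direct sum of $m$ copies of the algebra $M_m(\R)$ of $m\times m$ real matrices. The key observation, read off directly from \eqref{6AB0}, is that the middle index $j$ is never summed over and never altered: in $c_{ijr}=\sum_k a_{ijk}b_{kjr}$ it occurs identically on both sides. Equivalently, the defining rule \eqref{6ma0} yields a nonzero product only when the middle indices coincide, as enforced by the factor $\delta_{jn}$. Thus $j$ plays the role of an inert block label, and the components of a cubic matrix belonging to distinct values of $j$ never interact under $*_0$.

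Concretely, I would slice each cubic matrix along its middle index. For a fixed $j\in I$ and $A=(a_{ijk})\in\mathfrak C$, set $(A^{(j)})_{ik}=a_{ijk}$, an ordinary $m\times m$ matrix. The first step is to recast \eqref{6AB0} in these terms, namely
\begin{equation*}
(A*_0 B)^{(j)}_{ir}=\sum_{k=1}^m a_{ijk}b_{kjr}=\sum_{k=1}^m (A^{(j)})_{ik}(B^{(j)})_{kr}=\bigl(A^{(j)}B^{(j)}\bigr)_{ir},
\end{equation*}
so that $*_0$ becomes ordinary matrix multiplication carried out separately within each block $j$.

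The second step is to assemble the slices into the single map
\begin{equation*}
\Phi\colon(\mathfrak C,*_0)\longrightarrow\bigoplus_{j=1}^m M_m(\R),\qquad \Phi(A)=\bigl(A^{(1)},A^{(2)},\dots,A^{(m)}\bigr).
\end{equation*}
I would then verify that $\Phi$ is linear and bijective---immediate, since $\Phi$ merely regroups the $m^3$ entries $a_{ijk}$ as the entries of $m$ matrices of size $m\times m$, with nothing lost or repeated---and that $\Phi$ is multiplicative, which is precisely the slice identity above: $\Phi(A*_0 B)=\Phi(A)\,\Phi(B)$, the product on the right being componentwise matrix multiplication in the direct sum. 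Hence $\Phi$ is an algebra isomorphism and $(\mathfrak C,*_0)\cong\bigoplus_{j=1}^m M_m(\R)$, which is the asserted decomposition into a direct sum of algebras of square matrices.

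There is no genuine obstacle here once the invariance of the middle index is noticed; the remainder is bookkeeping. The only point deserving care is multiplicativity, but it is forced by \eqref{6AB0} and can even be checked just on the basis elements: under $\Phi$ the matrix $E_{ijk}$ is sent to the standard matrix unit with a single $1$ in position $(i,k)$ placed in the $j$-th summand and zero elsewhere, and then \eqref{6ma0} reproduces exactly the multiplication of matrix units within a single block, the factor $\delta_{jn}$ guaranteeing that distinct blocks annihilate each other.
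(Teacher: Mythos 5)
Your proof is correct and follows essentially the same route as the paper's source for this result: the paper itself defers the proof to \cite{Mak} and \cite{Rpd}, where the decomposition is obtained exactly by observing that the middle index is inert under $*_0$ and slicing each cubic matrix into $m$ square matrices accordingly. The slice identity $(A*_0B)^{(j)}=A^{(j)}B^{(j)}$ and the resulting isomorphism onto $\bigoplus_{j=1}^m M_m(\R)$ are precisely the standard argument, so there is nothing to add.
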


Define multiplication:
\begin{equation}\label{6ma}
 E_{ijk}*_a E_{lnr}=\delta_{kl}E_{ia(j,n)r},
 \end{equation}
  where $a \colon   I\times I\to I$, $ (j,n) \mapsto a(j,n) \in I$,  is an arbitrary associative binary operation.

  Note that \eqref{6ma0} is  \textsl{not} a particular case of \eqref{6ma}.

  Denote by $\mathcal O_m$ the set of all associative binary operations on $I$.

 The general formula for the multiplication is the extension of \eqref{6ma} by bilinearity, i.e.
 for any two cubic matrices $A=(a_{ijk}), B=(b_{ijk})\in \mathfrak C$
 the matrix $A*_a B=(c_{ijk})$ is defined by
\[
 c_{ijr}=\sum_{l,n: \, a(l,n)=j}\sum_k a_{ilk}b_{knr}.
\]
Note that $c_{ijr}=0$ for $j$ such that $\{l,n: \, a(l,n)=j\}=\emptyset$.


 \begin{lemma} The multiplication (\ref{6ma}) is associative for each associative $a\in \mathcal O_m$.
  \end{lemma}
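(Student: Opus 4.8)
The plan is to verify associativity on the basis matrices $E_{ijk}$ and then extend to arbitrary cubic matrices by bilinearity. Since $*_a$ is by definition the bilinear extension of \eqref{6ma}, the identity $(A*_a B)*_a C = A*_a(B*_a C)$ holds for all $A,B,C\in\mathfrak C$ as soon as it holds whenever $A,B,C$ range over the basis $\{E_{ijk}\}$. This reduction is the only structural step, and it is routine: both triple products are trilinear in $(A,B,C)$, so agreement on basis triples forces agreement everywhere.

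For the core computation I would take three basis matrices $E_{ijk}$, $E_{lnr}$, $E_{pqs}$ and evaluate both bracketings directly from \eqref{6ma}. Multiplying from the left,
\[
(E_{ijk}*_a E_{lnr})*_a E_{pqs}
=\delta_{kl}\,E_{i\,a(j,n)\,r}*_a E_{pqs}
=\delta_{kl}\delta_{rp}\,E_{i\,a(a(j,n),q)\,s},
\]
while grouping to the right gives
\[
E_{ijk}*_a (E_{lnr}*_a E_{pqs})
=\delta_{rp}\,E_{ijk}*_a E_{l\,a(n,q)\,s}
=\delta_{kl}\delta_{rp}\,E_{i\,a(j,a(n,q))\,s}.
\]
Comparing the two expressions, the scalar factors $\delta_{kl}\delta_{rp}$ coincide (the contraction conditions $k=l$ and $r=p$ are imposed on the outer indices independently of the middle slot, and arise in the same order in both bracketings), and the surviving basis matrix agrees in its first index $i$ and last index $s$. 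Hence the two products are equal precisely when $a(a(j,n),q)=a(j,a(n,q))$, that is, exactly when $a$ is associative, which is the hypothesis $a\in\mathcal O_m$.

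I do not expect any genuine obstacle here: the statement simply transports the associativity of $a$ through the middle index of the cubic matrices, while the outer indices compose by a matrix-type contraction that is automatically associative and does not interact with the middle operation. The only point requiring care is the index bookkeeping in keeping these two mechanisms separate. If one preferred to bypass the reduction to basis elements, one could instead substitute the general component formula for $c_{ijr}$ into itself and rearrange the resulting double sums, matching the summation ranges via $a(a(l,n),\,\cdot\,)=a(l,\,a(n,\,\cdot\,))$; but this is more cumbersome, and the basis computation above is the cleanest route.
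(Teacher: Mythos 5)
Your proof is correct: the index bookkeeping in both bracketings is right, the contraction factors $\delta_{kl}\delta_{rp}$ arise identically on both sides, and the two triple products differ only through $a(a(j,n),q)$ versus $a(j,a(n,q))$, so associativity of $a$ is exactly what is needed; the trilinearity reduction to basis elements is also valid. The paper itself gives no proof (it defers to Maksimov \cite{Mak} and \cite{Rpd}), and your basis computation is precisely the standard argument found there, so there is nothing further to compare.
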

 If the equation $a(x,u) =v$ (resp. $a(u,x) = v$) is uniquely solvable for any $u, v\in I$
then the operation $a$ on $I$ has right (resp. left) unique solvability.

\begin{lemma}\label{sc} If the operation $a$ on $I$ has right or left unique solvability, then
$$\sum_{d\in I}\sum_{{j,m: \atop a(j,m)=d}}\gamma_{j,m}=\sum_{j\in I}\sum_{m\in I}\gamma_{j,m}.$$
\end{lemma}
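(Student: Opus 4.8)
The plan is to read both sides of the asserted equality as sums of the fixed array $\gamma_{j,m}$ over subsets of the index set $I\times I$, and to check that the two families of index sets agree up to a reindexing. The right-hand side is simply the sum of $\gamma_{j,m}$ over every pair $(j,m)\in I\times I$, each pair counted exactly once. On the left-hand side the same pairs are grouped by the value $d=a(j,m)$: for each $d\in I$ the inner sum ranges over the fibre $a^{-1}(d)=\{(j,m)\in I\times I : a(j,m)=d\}$.

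First I would observe that, since $a\colon I\times I\to I$ is a total single-valued map, the fibres $a^{-1}(d)$ for $d\in I$ are pairwise disjoint and their union is all of $I\times I$; in other words they form a partition $I\times I=\bigsqcup_{d\in I} a^{-1}(d)$. Summing a fixed array block by block over a partition returns the total sum, so the left-hand side equals $\sum_{(j,m)\in I\times I}\gamma_{j,m}$, which is exactly the right-hand side. This settles the identity and does so for an arbitrary operation $a$.

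The role of the hypothesis is to present this regrouping as an explicit change of summation variable, which is the form I would actually record. Under right unique solvability, for each fixed second index $m$ the right translation $R_m\colon j\mapsto a(j,m)$ is a bijection of $I$ (injective by the unique-solvability of $a(x,u)=v$, hence surjective since $I$ is finite). Consequently, in $\sum_{d\in I}\sum_{j:\,a(j,m)=d}\gamma_{j,m}$ each inner sum has a single term, and as $d$ runs over $I$ the index $j=R_m^{-1}(d)$ runs bijectively over $I$; summing over $m$ then gives $\sum_{m}\sum_{j}\gamma_{j,m}$. The case of left unique solvability is symmetric, using the left translations $L_j\colon m\mapsto a(j,m)$.

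I do not expect a genuine obstacle here; the only points requiring care are bookkeeping ones. The statement overloads the letter $m$ as both the cardinality of $I$ and a summation index, so I would rename the inner index to keep the fibre description $a^{-1}(d)$ unambiguous, and I would note that some fibres may be empty and simply contribute $0$. It is worth emphasising that the equality is fundamentally the partition identity above and holds for every $a$; the unique-solvability hypothesis is what converts the regrouping into the explicit bijective substitution and, in addition, guarantees that each fibre is the graph of a bijection and hence nonempty—properties that are convenient in the stochasticity computations for which this lemma is invoked.
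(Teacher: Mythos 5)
Your proposal is correct, but note that for this lemma the paper supplies no proof of its own: it is one of the results declared to be "proven in \cite{Mak} (see also \cite{Rpd})", so there is no in-paper argument to compare against, and your proof must be judged on its own terms. It stands. Your first argument is complete as it is: since $a\colon I\times I\to I$ is an everywhere-defined, single-valued map, the fibres $a^{-1}(d)$, $d\in I$, are pairwise disjoint and cover $I\times I$ (empty fibres contributing $0$), so grouping the sum of $\gamma_{j,m}$ by the value $d=a(j,m)$ cannot change its total. This actually proves more than the lemma asserts: the identity holds for \emph{every} binary operation $a$, and the unique-solvability hypothesis is superfluous for the identity itself. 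The paper quietly corroborates this in Section 3, where for the operation $a(i,j)=\max\{i,j\}$ --- explicitly noted there to be not uniquely solvable --- the very same middle-index summation \eqref{b} reduces the cubic Kolmogorov--Chapman system \eqref{kcm} to the square-matrix system \eqref{bb}, exactly as Lemma \ref{sc} is used to pass from \eqref{ms} to \eqref{mq}. Your second argument, in which right unique solvability makes each translation $R_m\colon j\mapsto a(j,m)$ a bijection of $I$ and the regrouping becomes an explicit change of summation variable, is the version that genuinely uses the hypothesis and is presumably the argument intended in the cited sources; it is also correct, modulo the (harmless, finite) interchange of the sums over $d$ and over the second index, which you should state.

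One side remark in your last paragraph overstates matters: one-sided unique solvability does \emph{not} make each fibre "the graph of a bijection". Right unique solvability makes $a^{-1}(d)$ the graph of a \emph{function} of the second coordinate (hence nonempty, with exactly $|I|$ elements), but that function need not be injective: the operation $a(j,m)=j$ is associative and right uniquely solvable, yet its fibre over $d$ is $\{d\}\times I$, the graph of a constant map. This does not affect either of your proofs of the identity; it only concerns that tangential claim.
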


\subsection{Stochasticity}
Define several kinds of cubic stochastic matrices (see \cite{Mak,MG}):
a cubic matrix $P=(p_{ijk})_{i,j,k=1}^m$ is called
\begin{itemize}
\item[] $(1,2)$-\emph{stochastic} if
\[p_{ijk}\geq 0, \qquad  \sum_{i,j=1}^mp_{ijk}=1, \ \ \text{for all} \ k.\]

\item[] $(1,3)$-\emph{stochastic} if
\[p_{ijk}\geq 0, \qquad  \sum_{i,k=1}^mp_{ijk}=1, \ \ \text{for all} \ j.\]

\item[] $(2,3)$-\emph{stochastic} if
\[p_{ijk}\geq 0, \qquad  \sum_{j,k=1}^mp_{ijk}=1, \ \ \text{for all} \ i.\]

\item[] $3$-\emph{stochastic} if
\[p_{ijk}\geq 0, \qquad  \sum_{k=1}^mp_{ijk}=1, \ \ \text{for all} \  i,j.\]
The last one can be also given with respect to first and second index.
\end{itemize}

Maksimov \cite{Mak} also defined a twice stochastic matrix:
a (2,3)-stochastic cubic matrix is called \emph{twice stochastic} if
\[\sum_{i=1}^m p_{ijk}=\frac{1}{m}, \qquad \text{for all} \ \ j,k.\]

\begin{pro} $(2,3)$-stochastic (and twice) stochastic
 cubic matrices form a convex semigroup\footnote{ A semigroup is an algebraic structure consisting of a set together with an associative binary operation.} with respect to multiplication (\ref{6ma}).
\end{pro}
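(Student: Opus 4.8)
The plan is to check, in order, convexity of each set, closure under $*_a$ (which together with the associativity of $*_a$ proved above yields the semigroup structure), and finally the preservation of the twice-stochastic normalization, which I expect to be the only non-routine point.

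Convexity is immediate. If $A=(a_{ijk})$ and $B=(b_{ijk})$ are $(2,3)$-stochastic and $\lambda\in[0,1]$, then $\lambda A+(1-\lambda)B$ has non-negative entries and $\sum_{j,k}\bigl(\lambda a_{ijk}+(1-\lambda)b_{ijk}\bigr)=\lambda+(1-\lambda)=1$ for every $i$; if in addition both are twice stochastic, then $\sum_i\bigl(\lambda a_{ijk}+(1-\lambda)b_{ijk}\bigr)=\tfrac1m$ for every $j,k$. Hence both sets are convex.

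For closure write $C=A*_a B=(c_{ijr})$ with $c_{ijr}=\sum_{l,n:\,a(l,n)=j}\sum_k a_{ilk}b_{knr}$. All entries are sums of products of non-negative numbers, so $c_{ijr}\ge 0$. Fix $i$ and sum over $j$ and $r$. Applying Lemma~\ref{sc} with $\gamma_{l,n}=\sum_k a_{ilk}b_{knr}$ collapses $\sum_j\sum_{l,n:\,a(l,n)=j}$ into the free sum $\sum_{l,n}$, so that
$$\sum_{j,r}c_{ijr}=\sum_{l,n}\sum_{k}a_{ilk}\Bigl(\sum_r b_{knr}\Bigr)=\sum_{l,k}a_{ilk}\Bigl(\sum_{n,r}b_{knr}\Bigr).$$
Since $B$ is $(2,3)$-stochastic, $\sum_{n,r}b_{knr}=1$ for each $k$, and then $\sum_{l,k}a_{ilk}=1$ because $A$ is $(2,3)$-stochastic. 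Thus $\sum_{j,r}c_{ijr}=1$, so $C$ is $(2,3)$-stochastic. Together with the associativity of $*_a$ already established, this shows the $(2,3)$-stochastic matrices form a convex semigroup.

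The main obstacle is the twice-stochastic case, where one must produce the exact value $\tfrac1m$. Assume $A,B$ are twice stochastic and fix $j,r$. Pushing the summation over $i$ through the product formula and using $\sum_i a_{ilk}=\tfrac1m$ gives
$$\sum_i c_{ijr}=\sum_{l,n:\,a(l,n)=j}\sum_k\Bigl(\sum_i a_{ilk}\Bigr)b_{knr}=\frac1m\sum_{l,n:\,a(l,n)=j}\sum_k b_{knr}.$$
The inner sum $\sum_k b_{knr}$ is a sum over the first index of $B$, which equals $\tfrac1m$ by twice-stochasticity, so $\sum_i c_{ijr}=\tfrac1{m^2}\,\bigl|\{(l,n):a(l,n)=j\}\bigr|$. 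It therefore remains to show that each fibre $\{(l,n):a(l,n)=j\}$ has exactly $m$ elements, and this is exactly where the right (or left) unique solvability of $a$ enters: right solvability means $x\mapsto a(x,n)$ is a bijection of $I$ for each fixed $n$, so for each of the $m$ choices of $n$ there is a unique $l$ with $a(l,n)=j$; the left case is symmetric. Hence the fibre has $m$ elements and $\sum_i c_{ijr}=\tfrac{m}{m^2}=\tfrac1m$, so $C$ is twice stochastic. Combined with convexity and associativity, this completes the proof.
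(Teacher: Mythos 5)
Your proof is correct, but note that there is no in-paper argument to compare it against: the paper presents this proposition among the results ``proven in \cite{Mak} (see also \cite{Rpd} for detailed proofs)'' and gives no proof of its own. Your argument is the natural self-contained one: convexity is linear, $(2,3)$-closure follows by collapsing $\sum_j\sum_{l,n:\,a(l,n)=j}$ into the free sum $\sum_{l,n}$, and the twice-stochastic case reduces to counting fibers of $a$. One remark on the $(2,3)$-part: you do not actually need the hypothesis of Lemma~\ref{sc} there, since the identity in that lemma holds for \emph{every} binary operation $a$ --- the fibers $\{(l,n):a(l,n)=d\}$, $d\in I$, always partition $I\times I$ --- so the $(2,3)$-stochastic matrices form a convex semigroup for an arbitrary associative $a$, with no solvability assumption at all.

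The twice-stochastic step is where your write-up adds genuine value, and your diagnosis there is exactly right: everything hinges on the fiber count $\bigl|\{(l,n):a(l,n)=j\}\bigr|=m$ for every $j$, which right (or left) unique solvability delivers. This hypothesis is not stated in the proposition but it cannot be dropped. Indeed, take $m=2$ and the associative, non-solvable operation $a(i,j)=\max\{i,j\}$ (the very operation studied in Section~3 of the paper), and let $A=B$ be the uniform cubic matrix with all entries $1/4$, which is twice stochastic. The fiber of $j=1$ is $\{(1,1)\}$ while that of $j=2$ is $\{(1,2),(2,1),(2,2)\}$, so the product $C=A*_aB$ satisfies
\[
\sum_i c_{i1r}=\frac14\neq\frac12=\frac1m,
\]
i.e.\ $C$ is $(2,3)$-stochastic but not twice stochastic. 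So your proof establishes the proposition in the form in which it must be read --- under the paper's standing assumption of unique solvability --- and your fiber-counting argument is precisely the point where that assumption enters and is indispensable.
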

%

\begin{rk} One also can show that $(1,2)$-stochastic cubic matrices form a convex semigroup. But
the collection of $(1,3)$-stochastic matrices does
not form a semigroup with respect to multiplication (\ref{6ma}).
\end{rk}

\subsection{Quadratic stochastic processes}

Denote by $\mathcal S$ the set of all possible kinds of stochasticity and
denote by $\mathbb M$ the set
of all possible multiplication rules of cubic matrices.

Let parameters $s\geq 0$, $t\geq 0$, are considered as time.

Denote by
$\M^{[s,t]}=\left(P_{ijk}^{[s,t]}\right)_{i,j,k=1}^{\ \underset{m}{}}$ a cubic matrix with two parameters.
\begin{defn}[\cite{LR}]
A family $\{\M^{[s,t]}: \ s,t\in\R_+\}$ is called a Markov process of cubic matrices (or a quadratic stochastic process (QSP))
of type $(\sigma|\mu)$ if for each time $s$ and $t$ the cubic matrix
$\M^{[s,t]}$ is stochastic in sense $\sigma\in\mathcal S$ and satisfies the Kolmogorov-Chapman equation
(for cubic matrices):
\begin{equation}\label{KC}
\M^{[s,t]}=\M^{[s,\tau]}*_\mu\M^{[\tau,t]}, \qquad \text{for all} \ \ 0\leq s<\tau<t
\end{equation} with respect to the multiplication $\mu\in \mathbb M$.
\end{defn}

QSPs arise naturally in the study of biological and physical systems with interactions (see  \cite{Ke}).
Following \cite{CLRm} assume there are $m$ different types of particles, denoted by $I=\{1,2,\dots,m\}$ the set of all types and our main
aim is to study the asymptotic behavior of the variables
 $\xi^{(t)}_i =$ number of particles of type $i$ at the time $t$.
 The initial state is taken to be fixed and described by $\xi^{(0)}_i$,
  the numbers of particles of type $i\in I$ in the initial (zero) time.
These numbers are assumed finite. Denote by
$$x^{(t)}_i=P^{(t)}(i)={\xi^{(t)}_i\over \sum_{j=1}^m \xi^{(t)}_j},$$
 fraction of particles of type $i$ at the time $t$.
Thus
$$x^{(t)}=(x_1^{(t)}, \dots, x_m^{(t)})\in S^{m-1}=\left\{x=(x_1,\dots, x_m)\in \mathbb R^m: x_i\geq 0, \, \sum_{i=1}^mx_i=1\right\}$$ is a
distribution of the system (i.e. the vector describing fractions of all types of particles) at the moment $t$.

Let $x^{(0)}=(x_1^{(0)}, \dots, x_m^{(0)})\in S^{m-1}$ be an initial distribution on $I$.
For arbitrary moments of time $s\geq 0$ and $t>0$, with $s<t$, the matrix
$\M^{[s,t]}$ gives the transition probabilities
from the distribution $x^{(s)}$ to the distribution $x^{(t)}$.
To use the matrix $\M^{[s,t]}=\left(P_{ijk}^{[s,t]}\right)$
we assume that a particle of type $i\in I$ and a particle of type $j\in I$
have interaction at time $s$, as an interaction process, then
with probability $P_{ijk}^{[s,t]}$ a particle of type $k$ appears at time $t$.
The equation \eqref{KC} gives
the time-dependent evolution law of the interacting process (dynamical system).

Since we should have $x^{(t)}\in S^{m-1}$, one can consider the following models:
\begin{itemize}
\item[-] Consider $P_{ijk}^{[s,t]}$ as the conditional
probability $P^{[s,t]}(k|i, j)$ that $i$th and $j$th particles (physics) or species (biology)
interbred successfully at time $s$, then they produce
an individual $k$ at time $t$.

Assume the ``parents'' $ij$ are independent
for any moment of time $s$, that is
$$P^{(s)}(i,j)=P^{(s)}(i)P^{(s)}(j)=x^{(s)}_ix^{(s)}_j,$$
 and we assume that the matrix $\left(P_{ijk}^{[s,t]}\right)$ is
3-stochastic, then the probability distribution $x^{(t)}$
can be found by the formula of the total probability as
\begin{equation}\label{ks}
x^{(t)}_k=\sum_{i,j=1}^m P^{(s)}(i,j)  P^{[s,t]}(k|i, j)=  \sum_{i,j=1}^mP_{ijk}^{[s,t]}x^{(s)}_ix^{(s)}_j, \ \  k=1, \dots, m, \ \ 0\leq s<t.
\end{equation}
For $1-$stochastic and $2-$stochastic it can be defined similarly, by replacing the corresponding indices.

\item[-] Consider now a physical (biological, chemical) system where
there are $m$ types of ``particles'' or molecules, the set of types is denoted by $I=\{1,\dots,m\}$,
 and each particle may split to two new ones
 having types from $I$. Consider $P_{ijk}^{[s,t]}$ as the conditional
probability $P^{[s,t]}(i,j|k)$ that a particle of type $k$ starts splitting
at time $s$ and finishes splitting at time $t$ and the result is
two particles with $i$th and $j$th types.

Assume  $\left(P_{ijk}^{[s,t]}\right)$ is (1,2)-stochastic then $x^{(t)}$ can be defined by
\begin{equation}\label{xt12}
x^{(t)}_k= \frac{1}{2} \sum_{i,j=1}^m\left(P_{kij}^{[s,t]}+P_{ikj}^{[s,t]}\right)x^{(s)}_j, \qquad k=1, \dots, m, \ \ 0\leq s<t.
\end{equation}
For (1,3)-stochastic and (2,3)-stochastic cases one can define similarly by replacing the indices.
\end{itemize}

Thus finding $P_{ijk}^{[s,t]}$ from the equation \eqref{KC} (at a fixed $(\sigma,\mu)$) and studying the time-dependent behavior
of $P_{ijk}^{[s,t]}$ we can describe the time-dependent evolution of $x^{(t)}$.
%

\subsection{The main problem}
To construct QSPs of type $(\sigma | \mu)$, i.e. to solve \eqref{KC}. To study the dynamics of such system when $t-s\to +\infty$.
 In this paper our aim is to construct and study QSPs, for the Maksimov's multiplication corresponding
to arbitrary operation $a$ on $I$ which has right (or left) unique solvability.

\section{Construction of QSPs}

The equation  \eqref{KC} has the following form
\begin{equation}\label{ms}
P_{ijr}^{[s,t]}=\sum_{l,n: \, a(l,n)=j}\sum_{k=1}^m P^{[s,\tau]}_{ilk}P_{knr}^{[\tau,t]}, \quad \forall i,j,r\in I.
\end{equation}
We have to fix a stochasticity of cubic matrices first and solve (\ref{ms}) in class of such matrices.

Consider Maksimov's multiplication corresponding
to arbitrary operation $a$ on $I$ which has right (resp. left) unique solvability.  Denote

\begin{equation}\label{mq0}q_{ir}^{[s,t]}=\sum_{j=1}^mP_{ijr}^{[s,t]}, \ \  \mathbb Q^{[s,t]}=\left(q_{ir}^{[s,t]}\right)_{i,r=1}^{\ \underset{m}{}}.
\end{equation}
Then using the solvability condition and Lemma \ref{sc} we reduce
equation \eqref{KC} (i.e. (\ref{ms})) to the following
\begin{equation}\label{mq}
q_{ir}^{[s,t]}=\sum_{k=1}^m q^{[s,\tau]}_{ik}q_{kr}^{[\tau,t]}, \quad \forall i, r\in I, \ \ \mbox{i.e.}, \ \
\mathbb Q^{[s,t]}=\mathbb Q^{[s,\tau]}\mathbb Q^{[\tau,t]}.
\end{equation}
Thus the Kolmogorov-Chapman equation for cubic matrices reduced to the Kolmogorov-Chapman equation for square matrices.
Summarizing we have
\begin{pro}\label{as} Any solution of equation \eqref{KC} for a multiplication $\mu$, corresponding
to operation $a$ stifling the condition of Lemma \ref{sc},
can be given by a solution of the system \eqref{mq} with a
matrix  $\mathbb Q^{[s,t]}=\left(q_{ir}^{[s,t]}\right)_{i,r=1}^{\ \underset{m}{}}$ which satisfies
\eqref{mq0}.
\end{pro}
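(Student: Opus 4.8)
The plan is to start from an arbitrary solution $\left(P^{[s,t]}_{ijr}\right)$ of the cubic Kolmogorov--Chapman equation written out as \eqref{ms}, to form the contracted square matrix $\mathbb Q^{[s,t]}$ by summing out the middle index as in \eqref{mq0}, and then to verify that $\mathbb Q^{[s,t]}$ solves the ordinary matrix equation \eqref{mq}. Since $\mathbb Q^{[s,t]}$ is defined through \eqref{mq0} by construction, the only thing that genuinely requires proof is that contracting \eqref{ms} over the index $j$ produces exactly \eqref{mq}; the rest is bookkeeping of index contractions.

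First I would fix $i,r\in I$ and sum both sides of \eqref{ms} over $j\in I$. The left-hand side becomes $\sum_{j}P^{[s,t]}_{ijr}=q^{[s,t]}_{ir}$ by the definition \eqref{mq0}. On the right-hand side the outer summation over $j$ combines with the inner constrained summation $\sum_{l,n:\,a(l,n)=j}$ into $\sum_{j\in I}\sum_{l,n:\,a(l,n)=j}$, i.e.\ a sum over all pairs $(l,n)$ grouped according to the value $d=a(l,n)$. The \emph{crux} of the argument is to recognize this grouped sum as an instance of Lemma \ref{sc}: applying that lemma (with the roles of its summation variables played by $l,n$) to the array $\gamma_{l,n}=\sum_{k}P^{[s,\tau]}_{ilk}P^{[\tau,t]}_{knr}$, for the fixed $i,r$, lets me replace $\sum_{j}\sum_{l,n:\,a(l,n)=j}$ by the unrestricted double sum $\sum_{l}\sum_{n}$. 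This is precisely the step that uses the right (or left) unique solvability of $a$; without that hypothesis the fibers of $a$ need not reindex to a free sum and the collapse fails. I therefore expect this invocation of Lemma \ref{sc} to be the main obstacle and the single place where the structural assumption on $a$ enters.

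Once the constraint has been removed, the remainder is a routine factorization: the index $l$ occurs only in the first factor and $n$ only in the second, so after moving the $k$-summation outside I obtain
\[
\sum_{k=1}^m\Big(\sum_{l=1}^m P^{[s,\tau]}_{ilk}\Big)\Big(\sum_{n=1}^m P^{[\tau,t]}_{knr}\Big)=\sum_{k=1}^m q^{[s,\tau]}_{ik}\,q^{[\tau,t]}_{kr},
\]
using \eqref{mq0} twice to identify the two inner sums with entries of $\mathbb Q^{[s,\tau]}$ and $\mathbb Q^{[\tau,t]}$. Equating this with the contracted left-hand side $q^{[s,t]}_{ir}$ yields \eqref{mq} for all $i,r\in I$, which is exactly the asserted reduction; equivalently, $\mathbb Q^{[s,t]}=\mathbb Q^{[s,\tau]}\mathbb Q^{[\tau,t]}$. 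I anticipate no further difficulty beyond the careful tracking of which index is being summed at each stage.
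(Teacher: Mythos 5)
Your proposal is correct and follows exactly the paper's route: sum \eqref{ms} over the middle index $j$, invoke Lemma \ref{sc} with $\gamma_{l,n}=\sum_k P^{[s,\tau]}_{ilk}P^{[\tau,t]}_{knr}$ to replace the fibered sum $\sum_{j}\sum_{l,n:\,a(l,n)=j}$ by the free double sum, and factor the result into $\sum_k q^{[s,\tau]}_{ik}q^{[\tau,t]}_{kr}$ via \eqref{mq0}. The paper compresses this into the sentence preceding the proposition ("using the solvability condition and Lemma \ref{sc} we reduce \eqref{KC} to \eqref{mq}"), and your write-up is simply the detailed version of that same argument.
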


Recall that a square matrix $\mathbb Q=(q_{ij})_{i,j=1}^m$ is called \emph{right stochastic}
if
\[q_{ij}\geq 0, \quad  \forall i,j=1,\dots,m; \qquad \sum_{j=1}^mq_{ij}=1, \ \ \forall i=1,\dots,m.\]

Similarly one can define a \emph{left stochastic} matrix being a non-negative real square matrix, with each column summing to 1 and
a \emph{doubly stochastic} matrix being a square matrix of non-negative real numbers with each row and column summing to 1.

A family of stochastic matrices $\{\mathbb Q^{[s,t]}: s,t\geq 0\}$
is called a \emph{Markov process} if it satisfies the Kolmogorov-Chapman equation (\ref{mq}).

The full set of solutions to (\ref{mq}) is not known yet. But there is a very wide class of its solutions
see \cite{CLR,CLRm, LR, LR0, ORT,RM}. One of these known solutions is the following (non-stochastic, time-homogeneous) matrix:
\begin{equation}\label{defo}
\begin{pmatrix}
q_{11}^{[s,t]}& q_{12}^{[s,t]}\\[2mm]
q_{21}^{[s,t]}& q_{22}^{[s,t]}
\end{pmatrix}=\begin{pmatrix}
\cos (t-s)& \sin (t-s)\\
-\sin (t-s)& \cos (t-s)
\end{pmatrix}.
\end{equation}

In \cite{RM} to construct chains of some algebras, for $m=2$,
 a wide class of solutions of \eqref{mq} is presented,
many of them are non-stochastic matrices, in general.
Let us give a list of families of (left, right, doubly) {\rm stochastic}
square matrices (see \cite{RM}), which
satisfy the equation \eqref{mq}, i.e. they generate interesting Markov processes:
\begin{align*}
 \Q_1^{[s,t]}&=\begin{pmatrix}
g(s)& g(s)\\[2mm]
1-g(s) &1-g(s)\\[2mm]
\end{pmatrix}, \  \text{where} \ \ g(s)\in [0,1] \  \text{is an arbitrary function};\\
\Q_2^{[s,t]}&=\frac{1}{2} \begin{pmatrix}
1+\frac{\Psi(t)}{\Psi(s)} & 1-\frac{\Psi(t)}{\Psi(s)}\\[2mm]
1-\frac{\Psi(t)}{\Psi(s)} & 1+\frac{\Psi(t)}{\Psi(s)}\\[2mm]
\end{pmatrix},
\end{align*}
 where $\Psi(t)> 0$ is an arbitrary decreasing function of $t\geq 0$;
  \begin{align*}
  \Q_3^{[s,t]}&=
\begin{cases}
\ \ \begin{pmatrix}
1 & 0\\[2mm]
0 & 1 \\[2mm]
\end{pmatrix}, & \ \ \text{if} \ \ s\leq t<b,\\[4mm]
\frac{1}{2} \begin{pmatrix}
1 & 1\\[2mm]
1 & 1 \\[2mm]
\end{pmatrix},& \ \ \text{if} \ \ t\geq b,\\
\end{cases}, \quad \text{where} \ \ b>0 ;\\[3mm]
 \Q_4^{[s,t]}&=\begin{pmatrix}
1  & 0\\[2mm]
1-\frac{\psi(t)}{\psi(s)} & \frac{\psi(t)}{\psi(s)}\\[2mm]
\end{pmatrix},
\end{align*}
where $\psi(t)>0$ is a decreasing function
of $t\geq 0$;
\begin{align*}
 \Q_5^{[s,t]}&=\begin{pmatrix}
f(t)& 1-f(t)\\[2mm]
f(t) &1-f(t)\\[2mm]
\end{pmatrix}, \ \ \text{where} \ \ f(t)\in [0,1] \ \ \text{is an arbitrary function};\\
 \Q_6^{[s,t]}(\lambda,\mu)&=
\begin{pmatrix}
1-\frac{\lambda-2\mu}{2(\lambda-\mu)}\left(1- \frac{\theta(t)}{\theta(s)} \right)&
\frac{\lambda-2\mu}{ 2(\lambda-\mu)} \left(1-\frac{\theta(t)}{\theta(s)}  \right)\\[2mm]
\frac{\lambda}{2(\lambda-\mu)} \left(1-\frac{\theta(t)}{\theta(s)} \right)&
 1-\frac{\lambda}{2(\lambda-\mu) }\left(1- \frac{\theta(t)}{\theta(s)}\right) \\[2mm]
\end{pmatrix},
\end{align*}
where  $\lambda$, $\mu$ are real parameters  such that $0<2\mu<\lambda$ and $\theta(t)>0$ is an arbitrary decreasing function;
\[\Q_7^{[s,t]}=
\begin{cases}
\begin{pmatrix}
1 & 0\\[2mm]
0 & 1\\[2mm]
\end{pmatrix}, & \ \ \text{if} \ \ s\leq t<a,\\[4mm]
\begin{pmatrix}
g(t) & 1-g(t)\\[2mm]
g(t) & 1-g(t)\\[2mm]
\end{pmatrix}, & \ \ \text{if} \ \ t\geq a,\\
\end{cases} \  \text{where} \ \ g(t)\in [0,1] \  \text{is an arbitrary function}.\]

We note that the matrices $\Q_i^{[s,t]}$, $i=1,\dots,7$, generate interesting usual Markov processes:
some of them independent on time, some depend only on $t$,
but many of them non-homogenously depend on both $s,t$.  Depending on the statistical models
of real-world processes one can choose
parameter functions (i.e. $g$,  $\Psi$, $\psi$, $f$,  $\theta$) and be able then to control
the evolution (with respect to time) of such Markov processes.

The following lemma gives a connection between stochastic matrices.
\begin{lemma}\label{02} The matrix $\mathcal M^{[s,t]}=\left(P_{ijk}^{[s,t]}\right)$, with $P_{ijk}^{[s,t]}\geq 0$,  is
\begin{itemize}
\item (1,2)-stochastic (resp. (2,3)-stochastic) if and only if the corresponding  matrix $\mathbb Q^{[s,t]}$
is left (resp. right) stochastic.
\item (1,3)-stochastic if and only if the corresponding  matrix $\mathbb Q^{[s,t]}$
satisfies $\sum_{i,r=1}^m q^{[s,t]}_{ir}\equiv m$.
\item 1-stochastic (resp. 3-stochastic) if and only if the corresponding  matrix $\mathbb Q^{[s,t]}$
satisfies $\sum_{i=1}^m q^{[s,t]}_{ir}\equiv m$ (resp. $\sum_{r=1}^m q^{[s,t]}_{ir}\equiv m$).
\item 2-stochastic iff $q^{[s,t]}_{ir}\equiv 1$.
\end{itemize}
\end{lemma}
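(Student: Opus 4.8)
The plan is to prove every bullet by one uniform device: substitute the definition $q_{ir}^{[s,t]}=\sum_{j=1}^m P_{ijr}^{[s,t]}$ from \eqref{mq0} into whichever sum over the square matrix $\mathbb{Q}^{[s,t]}$ appears in the statement, reorganise the resulting double or triple sum, and then read off one of the cubic stochasticity conditions after the innocuous renaming $r\leftrightarrow k$ of the third index. Since the hypothesis already assumes $P_{ijk}^{[s,t]}\ge 0$, every entry $q_{ir}^{[s,t]}=\sum_j P_{ijr}^{[s,t]}$ is automatically nonnegative, so throughout only the normalisation identities have to be matched and the sign requirements in the definitions of left, right and doubly stochastic matrices come for free.

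First I would record the four elementary identities that follow at once from the definition of $q$: the column sum $\sum_{i}q_{ir}^{[s,t]}=\sum_{i,j}P_{ijr}^{[s,t]}$, the row sum $\sum_{r}q_{ir}^{[s,t]}=\sum_{j,r}P_{ijr}^{[s,t]}$, the grand total $\sum_{i,r}q_{ir}^{[s,t]}=\sum_{i,j,r}P_{ijr}^{[s,t]}$, and the bare entry $q_{ir}^{[s,t]}=\sum_{j}P_{ijr}^{[s,t]}$. Each of these exposes exactly the index pattern of one stochasticity condition, and for three of the bullets this already gives a genuine two-sided equivalence. Reading the column-sum identity, ``$\sum_i q_{ir}^{[s,t]}=1$ for all $r$'' (left stochasticity) is literally ``$\sum_{i,j}P_{ijr}^{[s,t]}=1$ for all $r$'', which is $(1,2)$-stochasticity; symmetrically the row-sum identity converts right stochasticity into $(2,3)$-stochasticity. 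Likewise the bare-entry identity makes $q_{ir}^{[s,t]}\equiv 1$ coincide term by term with $\sum_j P_{ijr}^{[s,t]}=1$ for all $i,r$, i.e. $2$-stochasticity. These cases are exact in both directions.

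The remaining bullets, $(1,3)$-, $1$- and $3$-stochasticity, are the ones I expect to be the real point of the lemma, because each of them sums over indices other than the middle one $j$, whereas forming $\mathbb{Q}^{[s,t]}$ has already summed $j$ out. Here the forward implication is still a one-line computation: summing the defining relation over the single remaining index $j$ reproduces the stated aggregated identity. For instance, from $1$-stochasticity $\sum_i P_{ijr}^{[s,t]}=1$ (all $j,r$) I sum over $j$ to obtain $\sum_i q_{ir}^{[s,t]}=\sum_{i,j}P_{ijr}^{[s,t]}=m$ for every $r$; the same manoeuvre sends $3$-stochasticity to $\sum_r q_{ir}^{[s,t]}\equiv m$ and $(1,3)$-stochasticity to $\sum_{i,r}q_{ir}^{[s,t]}\equiv m$. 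The hard part, and the step I would treat most carefully, is the converse for precisely these three cases: because the reduction of Proposition \ref{as} collapses the middle index, a single identity such as $\sum_{i,r}q_{ir}^{[s,t]}=m$ is the total of the $m$ separate constraints $\sum_{i,k}P_{ijk}^{[s,t]}=1$, so the equivalence must be understood at the level of the reduced matrix $\mathbb{Q}^{[s,t]}$ rather than as a term-by-term reconstruction of $\mathcal M^{[s,t]}$. I would make this reading explicit, emphasising that for $(1,3)$-, $1$- and $3$-stochasticity the condition listed on $\mathbb{Q}^{[s,t]}$ is exactly the image of the cubic condition under the $j$-summation defining $q$, which is all that the passage to \eqref{mq} retains.
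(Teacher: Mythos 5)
Your proposal is correct and follows essentially the same route as the paper: the paper's entire proof of Lemma \ref{02} is the single sentence ``It is consequence of the equality \eqref{mq0}'', i.e.\ precisely the index bookkeeping you carry out, with nonnegativity of $q^{[s,t]}_{ir}$ inherited from $P^{[s,t]}_{ijk}\geq 0$. The one place where you go beyond the paper is also the place where your caution is justified: for the $(1,3)$-, $1$- and $3$-stochastic bullets only the forward implication (stochasticity of the cubic matrix implies the stated identity for $\mathbb Q^{[s,t]}$) is literally true, and the stated converses are false as written. A concrete counterexample for the second bullet: for $m=2$ take $P^{[s,t]}_{000}=P^{[s,t]}_{001}=1$ and all other entries zero; then $\sum_{i,r}q^{[s,t]}_{ir}=2=m$, yet $\sum_{i,k}P^{[s,t]}_{i0k}=2\neq 1$, so $\mathcal M^{[s,t]}$ is not $(1,3)$-stochastic, and analogous examples (e.g.\ $P^{[s,t]}_{000}=P^{[s,t]}_{001}=2$, rest zero) defeat the converses in the $1$- and $3$-stochastic cases. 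So your reading --- that in these three cases the lemma records the necessary condition which cubic stochasticity imposes on $\mathbb Q^{[s,t]}$, and nothing more --- is the right one; it is also all the paper ever uses, since Proposition \ref{ppp} invokes Lemma \ref{02} only in the direction ``$1$-stochastic $\Rightarrow\ \sum_i q^{[s,t]}_{ir}\equiv m$''. Your three exact equivalences ($(1,2)$-, $(2,3)$- and $2$-stochastic) are stated and proved correctly.
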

\begin{proof} It is consequence of the equality \eqref{mq0}.
\end{proof}
\begin{pro}\label{ppp} If $m>1$ and the operation $a$ on $I$ has right or left unique solvability, then equation \eqref{KC} does not
 have any solution in class of $i$-stochastic (for any $i=1, 2, 3$) cubic matrices.
\end{pro}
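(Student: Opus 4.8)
The plan is to avoid manipulating the cubic matrices directly and instead push everything through the reduction already in place. By Proposition \ref{as}, any solution $\M^{[s,t]}=\left(P_{ijk}^{[s,t]}\right)$ of \eqref{KC} produces the square matrix $\mathbb Q^{[s,t]}=\left(q_{ir}^{[s,t]}\right)$ of \eqref{mq0}, and this $\mathbb Q^{[s,t]}$ must satisfy the ordinary Kolmogorov-Chapman equation \eqref{mq}. I would then translate each of the three single-index stochasticity hypotheses into a constraint on $\mathbb Q^{[s,t]}$ via Lemma \ref{02}, and extract a numerical contradiction from \eqref{mq} by summing over a suitable index. The governing idea is that each such constraint forces a row sum or a column sum of $\mathbb Q^{[s,t]}$ to equal $m$ (or forces every entry to equal $1$), and matrix multiplication fails to reproduce these values once $m>1$.

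The $2$-stochastic case is immediate and I would settle it first. Here Lemma \ref{02} gives $q_{ir}^{[s,t]}\equiv 1$ for all $i,r$ and all $s,t$. Substituting into \eqref{mq} at any triple $0\le s<\tau<t$ yields $1=\sum_{k=1}^m 1\cdot 1=m$, which is false for $m>1$.

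For the $1$-stochastic and $3$-stochastic cases I would use the column-sum and row-sum characterizations of Lemma \ref{02}, namely $\sum_i q_{ir}^{[s,t]}\equiv m$ and $\sum_r q_{ir}^{[s,t]}\equiv m$ respectively, noting that these hold at every pair of times and so apply at the intermediate time $\tau$ as well. In the $1$-stochastic case I would sum \eqref{mq} over $i$ and interchange the order of summation, obtaining $\sum_i q_{ir}^{[s,t]}=\sum_k\left(\sum_i q_{ik}^{[s,\tau]}\right)q_{kr}^{[\tau,t]}$; replacing the inner sum by $m$ and then using $\sum_k q_{kr}^{[\tau,t]}=m$ turns the right-hand side into $m^2$, whereas the left-hand side is $m$, so that $m=m^2$ and hence $m=1$, contradicting $m>1$. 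The $3$-stochastic case is the mirror image: summing \eqref{mq} over $r$ and factoring out $\sum_r q_{kr}^{[\tau,t]}=m$ again produces $m=m^2$.

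I do not expect a genuinely hard step; once the reduction of Proposition \ref{as} is invoked the whole argument is bookkeeping. The one point that demands care — and the place a hasty argument could slip — is to remember that the constraints of Lemma \ref{02} hold uniformly in the time parameters, so that they may be applied to the two factors $\mathbb Q^{[s,\tau]}$ and $\mathbb Q^{[\tau,t]}$ \emph{separately}; this is precisely what converts a single factor of $m$ into the fatal factor of $m^2$.
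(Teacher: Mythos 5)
Your proposal is correct and follows essentially the same route as the paper: reduce to the square-matrix Kolmogorov--Chapman equation \eqref{mq} via Proposition \ref{as}, translate $i$-stochasticity into row/column-sum constraints on $\mathbb Q^{[s,t]}$ via Lemma \ref{02}, and derive the contradiction $m=m^2$ by applying the constraint to both factors $\mathbb Q^{[s,\tau]}$ and $\mathbb Q^{[\tau,t]}$. The only cosmetic difference is that you treat the $2$-stochastic case ($q_{ir}^{[s,t]}\equiv 1$, giving $1=m$) explicitly, whereas the paper proves $i=1$ and declares the other cases similar.
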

\begin{proof} We prove in case $i=1$ (the cases $i=2,3$ are similar).
Assume there is a solution $\mathcal M^{[s,t]}=\left(P_{ijk}^{[s,t]}\right)$, which is
1-stochastic, i.e.,
$$P_{ijk}^{[s,t]}\geq 0, \ \ \sum_{i=1}^mP_{ijk}^{[s,t]}=1, \ \ \forall j,k, \ \ 0\leq s<t.$$
Then by Lemma \ref{02} for corresponding square matrix $\mathbb Q^{[s,t]}$ we should have  $\sum_{i=1}^m q^{[s,t]}_{ir}\equiv m$.
Moreover, by Proposition \ref{as} the matrix $\mathbb Q^{[s,t]}$ should satisfy (\ref{mq}), which is impossible, because
$$m=\sum_{i=1}^mq_{ir}^{[s,t]}=\sum_{i=1}^m\sum_{k=1}^m q^{[s,\tau]}_{ik}q_{kr}^{[\tau,t]}=\sum_{k=1}^m mq_{kr}^{[\tau,t]}=m^2>m.$$
\end{proof}

\begin{rk} In case $m=1$ the equation \eqref{KC} becomes the following functional equation
\begin{equation}\label{mb}
P^{[s,t]}=P^{[s,\tau]}P^{[\tau,t]},
\end{equation}
where unknown function is $P^{[s,t]}=P_{111}^{[s,t]}$.

This equation is
known as Cantor's second equation which has a
very rich family of solutions:
\begin{itemize}
  \item[(a)] $P^{[s,t]}\equiv 0$;
  \item[(b)] $P^{[s,t]}=\frac{\Phi(t)}{\Phi(s)}$, where $\Phi$ is an arbitrary
function with $\Phi(s)\ne 0$;
   \item[(c)] \[P^{[s,t]}=\begin{cases}
1, & \text{if} \  \ s\leq t<c,\\[2mm]
0, &  \text{if} \ \ t\geq c.\\
\end{cases} \quad \text{where} \ \ c>0.\]
\end{itemize}
\end{rk}
\begin{rk} \begin{itemize}
\item[1)] According to Proposition \ref{ppp} we do not have $i$-stochastic solutions.

\item[2)] As it was mentioned above multiplication of two $(1,3)$-stochastic matrices may be non $(1,3)$-stochastic. Therefore
it is not clear existence of a $(1,3)$-stochastic solution to \eqref{KC} (i.e. (\ref{ms})). Below we shall construct some examples of such solution.

\item[3)] By above mentioned results one can see that $(1,2)$-stochasticity and $(2,3)$-stochasticity play a symmetric role.
Therefore below we find only $(1,2)$-stochastic solutions of (\ref{ms}).
\end{itemize}
\end{rk}

{\bf Condition 1.} For definiteness let us take $I=\{0,1,2,\dots, m-1\}$ as a group
with respect to operation $a$, defined by $a(i,j)=(i+j) ({\rm mod}\, m)$.
Then it is easy to see that $a$ is uniquely solvable.

Under this condition the elements of the matrix $\M^{[s,t]}$ can be renumbered as $\M^{[s,t]}=\left(P_{ijk}^{[s,t]}\right)_{i,j,k=0}^{\ \underset{m-1}{}}$.

For convenience of the writing of this cubic matrix we introduce square matrix
$$ \M_i^{[s,t]}=\left(P_{ijk}^{[s,t]}\right)_{j,k=0}^{\ \underset{m-1}{}}, \ \ i=0, 1, \dots, m-1.$$
Then the cubic matrix can be written as
$$\M^{[s,t]}=\left(\M_0^{[s,t]} | \M_1^{[s,t]} | \dots | \M_{m-1}^{[s,t]}\right).$$
The equation (\ref{ms}) can be written as
\begin{equation}\label{mm}
P_{ijr}^{[s,t]}=\sum_{k=0}^{m-1} \left(\sum_{l=0}^jP^{[s,\tau]}_{ilk}P_{k(j-l)r}^{[\tau,t]}+
\sum_{l=1}^{m-j-1}P^{[s,\tau]}_{i(j+l)k}P_{k(m-l)r}^{[\tau,t]}\right), \quad \forall i,j,r\in I.
\end{equation}

This is a non-linear system of functional equations with $m^3$ unknown two-variable functions $P_{ijr}^{[s,t]}$.
Trivial solution is $P_{ijr}^{[s,t]}={1\over m^2}$, $\forall i,j,r\in I$, $0\leq s<t$.
The analysis of the system (\ref{mm}) is difficult.
Therefore below we shall mainly consider the case $m=2$.

\subsection{$(1,3)$-stochastic solutions}

Now we construct QSPs of type $(13|a)$, where $13$ means $(1,3)$-stochasticity and $a$ means that we are considering multiplication
 (\ref{6ma}).

 For simplicity let us consider the case $m=2$.
Write a cubic matrix $\M^{[s,t]}$ in the following convenient form:
\begin{equation}\label{edd}
\M^{[s,t]}= \begin{pmatrix}
P_{000}^{[s,t]} &P_{001}^{[s,t]}& \vline &P_{100}^{[s,t]} &P_{101}^{[s,t]}\\[3mm]
P_{010}^{[s,t]} &P_{011}^{[s,t]}& \vline &P_{110}^{[s,t]} &P_{111}^{[s,t]}
\end{pmatrix}.
\end{equation}
This matrix generates QSP of type $(13|a)$ iff
\begin{equation}\label{oke}\begin{array}{lll}
P_{000}^{[s,t]}+P_{001}^{[s,t]}+P_{100}^{[s,t]} +P_{101}^{[s,t]}=1,\\[3mm]
P_{010}^{[s,t]} +P_{011}^{[s,t]}+P_{110}^{[s,t]}+P_{111}^{[s,t]}=1.\\[3mm]
P_{i0j}^{[s,t]}+P_{i1j}^{[s,t]}=q^{[s,t]}_{ij}, \ \ i,j=0,1.
\end{array}
\end{equation}
In addition to these conditions by Condition 1 the equation (\ref{KC}) becomes
\begin{equation}\label{ok}
\left\{\begin{array}{ll}
P_{i0j}^{[s,t]}=\sum_{k=0}^1\left(P_{i0k}^{[s,\tau]}P_{k0j}^{[\tau,t]}+P_{i1k}^{[s,\tau]}P_{k1j}^{[\tau,t]}\right)\\[3mm]
P_{i1j}^{[s,t]}=\sum_{k=0}^1\left(P_{i0k}^{[s,\tau]}P_{k1j}^{[\tau,t]}+P_{i1k}^{[s,\tau]}P_{k0j}^{[\tau,t]}\right), \ \ i,j=0,1.
\end{array}\right.
\end{equation}
In general it is difficult to solve the system (\ref{ok}). Let us solve it in class of functions satisfying
$$P_{000}^{[s,t]}=P_{001}^{[s,t]}=P_{100}^{[s,t]}\equiv f(s,t).$$ Assume also that matrix $\mathbb Q^{[s,t]}$ is left \emph{and} right stochastic.

Using these assumptions and (\ref{oke}) from (\ref{ok}) we get
$$f(s,t)=P_{000}^{[s,t]}=P_{000}^{[s,\tau]}P_{000}^{[\tau,t]}+P_{010}^{[s,\tau]}P_{010}^{[\tau,t]}+
P_{001}^{[s,\tau]}P_{100}^{[\tau,t]}+P_{011}^{[s,\tau]}P_{110}^{[\tau,t]}$$
$$=4f(s,\tau)f(\tau,t)-(q_{00}^{[s,\tau]}+q_{01}^{[s,\tau]})f(\tau,t)-(q_{00}^{[\tau,t]}+q_{10}^{[\tau,t]})f(s,\tau)
+q_{00}^{[s,\tau]}q_{00}^{[\tau,t]}+q_{01}^{[s,\tau]}q_{10}^{[\tau,t]}.$$

Since the matrix $\mathbb Q^{[s,t]}$  satisfies (\ref{mq}) we have
$$q_{00}^{[s,\tau]}q_{00}^{[\tau,t]}+q_{01}^{[s,\tau]}q_{10}^{[\tau,t]}=q_{00}^{[s,t]}$$
and since the matrix is left and right stochastic we get
\begin{equation}\label{fq}
f(s,t)=4f(s,\tau)f(\tau,t)-f(\tau,t)-f(s,\tau)
+q_{00}^{[s,t]}.\end{equation}

Note that $f(s,t)\leq 1/3$.

Take the matrix $\mathbb Q^{[s,t]}=\left(\begin{array}{cc}
1/2& 1/2\\
1/2& 1/2\end{array}\right)$. Thus
$q_{00}^{[s,t]}\equiv 1/2$ then denoting $h(s,t)=4f(s,t)-1$ the equation (\ref{fq}) can be
written as
\begin{equation}\label{mb}
h(s,t)=h(s,\tau)h(\tau,t).
\end{equation}
As Cantor's second equation this equation has solutions:
\begin{itemize}
  \item[(a)] $h(s,t)\equiv 0$;
  \item[(b)] $h(s,t)=\frac{\Phi(t)}{\Phi(s)}$, where $\Phi$ is an arbitrary
function with $\Phi(s)\ne 0$;
   \item[(c)] \[h(s,t)=\begin{cases}
1, & \text{if} \  \ s\leq t<c,\\[2mm]
0, &  \text{if} \ \ t\geq c.\\
\end{cases} \quad \text{where} \ \ c>0.\]
\end{itemize}
Now for each solutions we give corresponding QSP:

(a') The functional equation (\ref{fq}) has solutions $f(s,t)\equiv 1/4$. Thus the cubic matrices (independent on time)
\begin{equation}\label{e4}
\M_1^{[s,t]}= \begin{pmatrix}
1/4 &1/4& \vline &1/4 &1/4\\[3mm]
1/4&1/4& \vline &1/4 &1/4
\end{pmatrix}.
\end{equation}
generate a QSP of type  $(13 | a)$.

(b') The functional equation (\ref{fq}) has solutions $f(s,t)={1\over 4}\left(\frac{\Phi(t)}{\Phi(s)}+1\right)$.
Thus the cubic matrices
\begin{equation}\label{eb4}
\M_2^{[s,t]}= \begin{pmatrix}
f(s,t)&f(s,t)& \vline &f(s,t) &1-3f(s,t)\\[3mm]
{1\over 2}-f(s,t)&{1\over 2}-f(s,t)& \vline &{1\over 2}-f(s,t) &3f(s,t)-{1\over 2}
\end{pmatrix}.
\end{equation}
generate a QSP of type $(13 | a)$ iff
\begin{equation}\label{v}{1\over 6}\leq f(s,t)={1\over 4}\left(\frac{\Phi(t)}{\Phi(s)}+1\right)\leq {1\over 3}, \ \ \mbox{i.e.} \ \
-{1\over 3}\leq \frac{\Phi(t)}{\Phi(s)}\leq {1\over 3}.\end{equation}

Note that the condition (\ref{v}) can be satisfied for a function $\Phi$ when time is discrete, i.e., $t\in \mathbb N$.
Then for example we can take $\Phi(n)=3^{-n}$.

(c') In case (c) we have
$$f(s,t)=\left\{\begin{array}{ll}
1/2, \ \ \mbox{if} \ \ s\leq t<c\\[2mm]
1/4, \ \ \mbox{if} \ \  t\geq c.
\end{array}\right.$$
But this does not satisfy condition $f(s,t)\leq 1/3$.

Summarizing we have

\begin{pro} The matrices $\M_1^{[s,t]}$ defined in (\ref{e4}) generate a QSP of type $(13| a)$.
The matrices $\M_2^{[n,m]}$, $n,m\in \mathbb N$, $n<m$ defined in (\ref{eb4}) generate a
discrete-time QSP of type $(13| a)$.
\end{pro}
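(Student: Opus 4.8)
The plan is to verify, for each of the two families separately, the two defining requirements of a QSP of type $(13|a)$: first that the cubic matrix is $(1,3)$-stochastic (non-negativity together with the two row identities in \eqref{oke}), and second that it solves the Kolmogorov--Chapman equation in the reduced scalar form \eqref{ok}. For $\M_1^{[s,t]}$ of \eqref{e4} both are immediate. Every entry equals $1/4$, so each of the two sums in \eqref{oke} equals $4\cdot\tfrac14=1$, and substituting the constant value into any component of \eqref{ok} gives $\sum_{k=0}^1\big(\tfrac14\cdot\tfrac14+\tfrac14\cdot\tfrac14\big)=\tfrac14$, so \eqref{ok} holds identically. Thus $\M_1^{[s,t]}$ is a time-independent QSP of type $(13|a)$ with no further work.

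For $\M_2^{[s,t]}$ of \eqref{eb4} I would first dispose of stochasticity. All eight entries are affine in the single scalar $f=f(s,t)$, the row identities of \eqref{oke} hold as polynomial identities in $f$, and non-negativity of the four distinct entries $f,\ 1-3f,\ \tfrac12-f,\ 3f-\tfrac12$ is precisely the window $\tfrac16\le f\le\tfrac13$, i.e. condition \eqref{v}. This forces $-\tfrac13\le\Phi(t)/\Phi(s)\le\tfrac13$ and hence confines us to discrete time (e.g. $\Phi(n)=3^{-n}$), which explains the discrete-time qualifier in the statement.

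The real content is the Kolmogorov--Chapman equation \eqref{ok} for $\M_2$. The associated square matrix is the constant $\mathbb{Q}^{[s,t]}=\tfrac12\left(\begin{smallmatrix}1&1\\1&1\end{smallmatrix}\right)$, which is idempotent and so solves \eqref{mq}; by Proposition \ref{as} the necessary projection condition is therefore met. Writing $h=4f-1$, the excerpt already substitutes the entries into the $P_{000}$ component of \eqref{ok} and reduces it, via \eqref{fq}, to Cantor's equation $h(s,t)=h(s,\tau)h(\tau,t)$, solved by $h=\Phi(t)/\Phi(s)$. The remaining step is to perform the identical substitution in the other seven scalar equations of \eqref{ok} — the case $j=1$ of the first line and both values of $j$ in the second line, for $i=0,1$ — rewriting each as a functional relation in $h$, $h_1:=h(s,\tau)$ and $h_2:=h(\tau,t)$.

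I expect this last step to be the main obstacle, and it is where genuine care is required: Proposition \ref{as} only guarantees the projection to $\mathbb{Q}$, not the converse, so each of the eight components of \eqref{ok} must be checked on its own, and a priori the mixed entries $P_{001},P_{100},P_{101}$ (and their second-line counterparts) may produce functional relations different from the single relation $h=h_1h_2$. The crux is thus to confirm that all eight reductions are simultaneously compatible with $h=h_1h_2$. Should some component instead yield an incompatible relation, the ansatz $P_{000}=P_{001}=P_{100}\equiv f$ would be over-determined, and one would have to either enlarge the family of free entries or retain only the degenerate case. I would therefore carry out these eight substitutions explicitly, on the model of the $P_{000}$ computation, before asserting that $\M_2^{[s,t]}$ generates a QSP.
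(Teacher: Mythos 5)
Your verification for $\M_1^{[s,t]}$ is complete, correct, and is exactly the paper's own argument. For $\M_2^{[s,t]}$, however, your proposal stops precisely at the step on which everything turns: you announce that the remaining components of \eqref{ok} must be substituted and checked, you correctly warn that the mixed entries might produce relations incompatible with $h=h_1h_2$, and then you defer the computation. That computation is not a routine confirmation --- it fails. Write $f_1=f(s,\tau)$, $f_2=f(\tau,t)$, $h_i=4f_i-1$. The $(0,0,1)$ component of \eqref{ok} has right-hand side
\begin{align*}
\sum_{k=0}^1\Bigl(P_{00k}^{[s,\tau]}P_{k01}^{[\tau,t]}+P_{01k}^{[s,\tau]}P_{k11}^{[\tau,t]}\Bigr)
&=f_1f_2+\bigl(\tfrac12-f_1\bigr)\bigl(\tfrac12-f_2\bigr)+f_1(1-3f_2)+\bigl(\tfrac12-f_1\bigr)\bigl(3f_2-\tfrac12\bigr)\\
&=f_1+f_2-4f_1f_2=\frac{1-h_1h_2}{4},
\end{align*}
whereas its left-hand side is $P_{001}^{[s,t]}=f(s,t)=\frac{1+h_1h_2}{4}$, since $4f(s,t)-1=h(s,t)=h_1h_2$. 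Equality forces $h_1h_2=0$, i.e.\ $h\equiv 0$ and $f\equiv\frac14$, which collapses $\M_2$ to $\M_1$; the components $P_{100}$ and $P_{101}$ (and their second-row partners) fail in the same way, and in fact only the two components $P_{000}$ and $P_{010}$ hold identically. Concretely, for $\Phi(n)=3^{-n}$ and $(s,\tau,t)=(0,1,2)$ one has $f_1=f_2=\frac13$, so the sum above equals $\frac19+\frac1{36}+0+\frac1{12}=\frac29$, while $P_{001}^{[0,2]}=\frac14\bigl(1+\frac19\bigr)=\frac{5}{18}\neq\frac29$.

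So your suspicion of over-determination is exactly right, and completing your plan refutes, rather than proves, the second assertion: since case (b) requires $\Phi\neq 0$ everywhere, $h_1h_2$ never vanishes, so no member of the family $\M_2^{[n,m]}$ other than the degenerate one $f\equiv\frac14$ satisfies the Kolmogorov--Chapman equation, and these matrices do not generate a discrete-time QSP of type $(13|a)$. You should also know that the paper's own justification (the construction preceding the Proposition) has precisely the gap you flagged: it substitutes the ansatz only into the $P_{000}$ equation, obtains \eqref{fq}, solves it via Cantor's equation, and then asserts the conclusion without checking the other seven components. Your plan is methodologically sounder than the source, but as it stands it is an unfinished proof of a statement whose $\M_2$ half is false; the correct outcome of the eight substitutions you propose is that the ansatz admits only $f\equiv\frac14$, i.e.\ the Proposition must be restricted to $\M_1^{[s,t]}$.
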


\subsection{$(1,2)$-stochastic solutions}

Now we construct QSPs of type $(12|a)$, where $12$ means $(1,2)$-stochasticity and $a$ means that we are considering multiplication
 (\ref{6ma}).

 The matrix (\ref{edd}) generates QSP of type $(12|a)$ iff
\begin{equation}\label{ke}\begin{array}{lll}
P_{000}^{[s,t]}+P_{010}^{[s,t]}+P_{100}^{[s,t]} +P_{110}^{[s,t]}=1,\\[3mm]
P_{001}^{[s,t]}+P_{011}^{[s,t]}+P_{101}^{[s,t]} +P_{111}^{[s,t]}=1,\\[3mm]
P_{i0j}^{[s,t]}+P_{i1j}^{[s,t]}=q^{[s,t]}_{ij}, \ \ i,j=0,1.
\end{array}
\end{equation}
Note that for any left stochastic matrix $\mathbb Q^{[s,t]}$ the conditions (\ref{ke}) are satisfied.
Therefore it suffices to solve the equation (\ref{ok}). Assuming
$$P_{000}^{[s,t]}=P_{001}^{[s,t]}=P_{100}^{[s,t]}\equiv g(s,t)$$
we get
$$g(s,t)=P_{000}^{[s,t]}=P_{000}^{[s,\tau]}P_{000}^{[\tau,t]}+P_{010}^{[s,\tau]}P_{010}^{[\tau,t]}+
P_{001}^{[s,\tau]}P_{100}^{[\tau,t]}+P_{011}^{[s,\tau]}P_{110}^{[\tau,t]}$$
\begin{equation}\label{on}
=4g(s,\tau)g(\tau,t)-(q_{00}^{[s,\tau]}+q_{01}^{[s,\tau]})g(\tau,t)-(q_{00}^{[\tau,t]}+q_{10}^{[\tau,t]})g(s,\tau)
+q_{00}^{[s,\tau]}q_{00}^{[\tau,t]}+q_{01}^{[s,\tau]}q_{10}^{[\tau,t]}.
\end{equation}

Take the matrix $\mathbb Q^{[s,t]}=\left(\begin{array}{cc}
1/2& 1/2\\
1/2& 1/2\end{array}\right)$.  In this case the equation
(\ref{on}) has solutions as (a')-(c').

Now for each solutions we give corresponding QSP:

(a") The functional equation (\ref{on}) has solutions $g(s,t)\equiv 1/4$. Thus $(1,2)$-stochastic  cubic matrix has the form
\begin{equation}\label{u1}
\M^{[s,t]}= \begin{pmatrix}
1/4 &1/4& \vline &1/4 &F(s,t)\\[3mm]
1/4&1/4& \vline &1/4 &1/2-F(s,t)
\end{pmatrix}.
\end{equation}
where $F(s,t)$ should satisfy the equation
$$F(s,t)=P_{101}^{[s,t]}=P_{100}^{[s,\tau]}P_{001}^{[\tau,t]}+P_{110}^{[s,\tau]}P_{011}^{[\tau,t]}+
P_{101}^{[s,\tau]}P_{101}^{[\tau,t]}+P_{111}^{[s,\tau]}P_{111}^{[\tau,t]}$$
$$=2F(s,\tau)F(\tau,t)-{1\over 2}F(s,\tau)-{1\over 2}F(\tau,t)+{3\over 8}.$$
Denoting $G(s,t)=4F(s,t)-1$ one can rewrite this equation in the following form
$$G(s,t)={1\over 2}G(s,\tau)G(\tau,t).$$
The last equation has solutions $G(s,t)\equiv 0$, $G(s,t)={2\psi(t)\over \psi(s)}$ (for any $\psi\ne 0$) and
$$G(s,t)=\left\{\begin{array}{ll}
2, \ \ \mbox{if} \ \ 0\leq s<t\leq a\\[2mm]
0, \ \ \mbox{if} \ \ t> a.
\end{array}\right.$$
To these solutions correspond (by (\ref{u1})) the following QSPs of type $(12|a)$:
\begin{equation}\label{u11}
\M_3^{[s,t]}= \begin{pmatrix}
1/4 &1/4& \vline &1/4 &1/4\\[3mm]
1/4&1/4& \vline &1/4 &1/4
\end{pmatrix}.
\end{equation}
\begin{equation}\label{u12}
\M_4^{[s,t]}= \begin{pmatrix}
1/4 &1/4& \vline &1/4 &{1\over 4}+{\psi(t)\over 2\psi(s)}\\[3mm]
1/4&1/4& \vline &1/4 &{1\over 4}-{\psi(t)\over 2\psi(s)}
\end{pmatrix},
\end{equation}
where $\psi$ is such that $-{1\over 2}\leq \frac{\psi(t)}{\psi(s)}\leq {1\over 2}$.
This condition can be satisfied for a function $\psi$ when time is discrete, i.e., $t\in \mathbb N$.
Then for example we can take $\psi(n)=2^{-n}$.

The case $G(s,t)=2$ does not define a QSP, because in this case $F(s,t)>1/2$.

(b") For the case $g(s,t)={1\over 4}\left(\frac{\varphi(t)}{\varphi(s)}+1\right)$ (where $\varphi\ne 0$ is an arbitrary function) in (\ref{on})
the $(1, 2)$-stochastic cubic matrix has the form
\begin{equation}\label{eb4}
\M^{[s,t]}= \begin{pmatrix}
g(s,t)&g(s,t)& \vline &g(s,t) &L(s,t)\\[3mm]
{1\over 2}-g(s,t)&{1\over 2}-g(s,t)& \vline &{1\over 2}-g(s,t) &{1\over 2}-L(s,t)
\end{pmatrix},
\end{equation}
where $g(s,t)={1\over 4}\left(\frac{\varphi(t)}{\varphi(s)}+1\right)$ with $-1\leq \frac{\varphi(t)}{\varphi(s)}\leq 1$. In particular, this condition is satisfied for a positive and decreasing function $\varphi$. Here the function $L(s,t)$ should satisfy the following equation
\begin{equation}\label{q}
L(s,t)=2L(s,\tau)L(\tau,t)-{1\over 2}(L(s,\tau)+L(\tau,t))+{1\over 4}+{1\over 2}g(s,t).
\end{equation}

Note that the equation (\ref{q}) has solution
$L(s,t)=g(s,t)={1\over 4}\left(\frac{\varphi(t)}{\varphi(s)}+1\right)$.
We do not know any other solution of (\ref{q}).

Thus
\begin{equation}\label{jk}
\M_5^{[s,t]}= \begin{pmatrix}
g(s,t)&g(s,t)& \vline &g(s,t) &g(s,t)\\[3mm]
{1\over 2}-g(s,t)&{1\over 2}-g(s,t)& \vline &{1\over 2}-g(s,t) &{1\over 2}-g(s,t)
\end{pmatrix},
\end{equation}
where $g(s,t)={1\over 4}\left(\frac{\varphi(t)}{\varphi(s)}+1\right)$ with $-1\leq \frac{\varphi(t)}{\varphi(s)}\leq 1$ generates a
QSP of type $(12| a)$.

(c") In this case
$$g(s,t)=\left\{\begin{array}{ll}
1/2, \ \ \mbox{if} \ \ s\leq t<c\\[2mm]
1/4, \ \ \mbox{if} \ \  t\geq c.
\end{array}\right.$$
Then corresponding matrix is
\begin{equation}\label{sk}
\M_6^{[s,t]}=\left\{\begin{array}{ll}
\begin{pmatrix}
{1\over 2}&{1\over 2}& \vline &{1\over 2} &{1\over 2}\\[3mm]
0&0& \vline &0 &0
\end{pmatrix}, \ \ \mbox{if} \ \ s\leq t<c\\[5mm]
\begin{pmatrix}
{1\over 4}&{1\over 4}& \vline &{1\over 4} &{1\over 4}\\[3mm]
{1\over 4}&{1\over 4}& \vline &{1\over 4} &{1\over 4}
\end{pmatrix}, \ \ \mbox{if} \ \  t\geq c.
\end{array}
\right.
\end{equation}

Summarizing we have

\begin{pro} The matrices $\M_i^{[s,t]}$, $i=3,5,6$ defined above generate QSPs of type $(12| a)$.
The matrices $\M_4^{[n,m]}$, $n,m\in \mathbb N$, $n<m$ generate a
discrete-time QSP of type $(12| a)$.
\end{pro}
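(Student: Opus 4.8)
The plan is to check, for each of the four matrices, the two defining requirements of a QSP of type $(12\,|\,a)$ separately: that $\M^{[s,t]}$ is $(1,2)$-stochastic at every admissible pair of times, and that it solves the Kolmogorov--Chapman system (\ref{ok}). The stochasticity is the cheap part. All four matrices share the ansatz $P_{000}^{[s,t]}=P_{001}^{[s,t]}=P_{100}^{[s,t]}=:g(s,t)$ together with the fixed row sums coming from the constant doubly stochastic matrix $\mathbb Q^{[s,t]}$ with all entries $\tfrac12$, so that each matrix is determined by $g$ and by the single remaining entry $P_{101}^{[s,t]}$, with $P_{i1j}^{[s,t]}=\tfrac12-P_{i0j}^{[s,t]}$. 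I would first record, once and for all, that under this ansatz both column-sum identities in (\ref{ke}) hold identically (each column sums to $2\cdot\tfrac12=1$), so that $(1,2)$-stochasticity collapses to the nonnegativity of the entries; this nonnegativity is exactly what will pin down the parameter ranges in each case.

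For the Kolmogorov--Chapman requirement I would substitute the ansatz into all eight scalar equations of (\ref{ok}) and show they reduce to just two: the equation (\ref{on}) governing $g$ and the single equation governing $P_{101}$. The four equations with constant-type right-hand sides (for $P_{000},P_{001},P_{100}$ and their second-row partners $P_{010},P_{011},P_{110}$) either repeat (\ref{on}) or follow from it after using $P_{i1j}=\tfrac12-P_{i0j}$, and the equations for $P_{101}$ and $P_{111}$ are equivalent to each other for the same reason. With this reduction in hand the four matrices are dispatched as follows. For $\M_3$ in (\ref{u11}) one takes $g\equiv\tfrac14$ and $P_{101}\equiv\tfrac14$, and direct substitution verifies the two surviving equations. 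For $\M_4$ in (\ref{u12}) again $g\equiv\tfrac14$, while $P_{101}=\tfrac14+\tfrac{\psi(t)}{2\psi(s)}$; the substitution $G=4P_{101}-1=\tfrac{2\psi(t)}{\psi(s)}$ turns its equation into $G(s,t)=\tfrac12G(s,\tau)G(\tau,t)$, which the quotient form solves. For $\M_5$ in (\ref{jk}) one sets $h=4g-1=\tfrac{\varphi(t)}{\varphi(s)}$, so that (\ref{on}) becomes Cantor's second equation $h(s,t)=h(s,\tau)h(\tau,t)$, solved by the quotient; then the choice $L=g$ turns (\ref{q}) back into the $g$-equation, so it holds automatically. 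Finally, for $\M_6$ in (\ref{sk}), where $g$ is the two-valued step function, $h=4g-1$ is precisely the step-function solution of Cantor's second equation, which I would confirm by the elementary case split on the position of $s<\tau<t$ relative to $c$: since the type of $\M^{[s,t]}$ depends only on whether the later time exceeds $c$, the only cases are ``all three times below $c$'', ``$\tau<c\le t$'', and ``$c\le\tau$'', and in each the relevant block product matches the claimed value.

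The nonnegativity analysis then fixes the admissible parameters and, crucially, the continuous-versus-discrete dichotomy. For $\M_3$ and $\M_6$ every entry lies automatically in $[0,1]$, giving genuine continuous-time QSPs; for $\M_5$ nonnegativity is equivalent to $-1\le\tfrac{\varphi(t)}{\varphi(s)}\le1$, which a positive decreasing $\varphi$ supplies for all $s<t$. The one place where I expect a genuine obstruction is $\M_4$: nonnegativity of $\tfrac14\pm\tfrac{\psi(t)}{2\psi(s)}$ forces $\bigl|\tfrac{\psi(t)}{\psi(s)}\bigr|\le\tfrac12$ for every admissible pair, and since any positive continuous $\psi$ satisfies $\tfrac{\psi(t)}{\psi(s)}\to1$ as $t\to s^{+}$, this cannot hold in continuous time; it can hold only at integer times, e.g. $\psi(n)=2^{-n}$, which is exactly why $\M_4$ yields only a \emph{discrete}-time QSP. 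Thus the real work is the clean bookkeeping that the eight Kolmogorov--Chapman equations collapse to two, together with the $\M_6$ case split and the nonnegativity bounds that separate the continuous-time from the discrete-time statements.
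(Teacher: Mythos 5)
Your overall route is the same as the paper's: check $(1,2)$-stochasticity through (\ref{ke}), then verify the Kolmogorov--Chapman system (\ref{ok}) case by case, and your individual verifications (the substitution $G=4P_{101}-1$ for $\M_4$, the choice $L=g$ for $\M_5$, the case split relative to $c$ for $\M_6$, and the nonnegativity bounds that force discrete time for $\M_4$) are all correct. However, your central bookkeeping claim --- that under the ansatz the eight equations of (\ref{ok}) collapse to just two, because the equations for $P_{000},P_{001},P_{100}$ ``repeat (\ref{on}) or follow from it after using $P_{i1j}=\tfrac12-P_{i0j}$'' --- is false, and this is a genuine gap. Write $g_1=g(s,\tau)$, $g_2=g(\tau,t)$, $F_1=P_{101}^{[s,\tau]}$, $F_2=P_{101}^{[\tau,t]}$. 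Substituting the ansatz into the equation of (\ref{ok}) for $P_{001}$ gives
\[
g(s,t)=2g_1g_2+2g_1F_2-g_1-\tfrac12\,(g_2+F_2)+\tfrac12,
\]
which involves $F_2$, an entry not constrained by the $g$-ansatz. Given (\ref{on}) (with all $q$-entries equal to $\tfrac12$), this equation is equivalent to the compatibility condition $(g_2-F_2)(1-4g_1)=0$; symmetrically, the $P_{100}$ equation yields $(g_1-F_1)(1-4g_2)=0$. So the eight equations collapse to \emph{four} independent ones (those for $P_{000},P_{001},P_{100},P_{101}$; only the second-row equations duplicate these via $P_{i1j}=\tfrac12-P_{i0j}$), and the two extra ones are genuine constraints coupling $g$ and $P_{101}$.

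The gap is easy to close, which is why none of your conclusions is wrong: for $\M_3$ and $\M_4$ one has $g\equiv\tfrac14$, so $1-4g_1\equiv 0$ and both compatibility conditions hold no matter what $P_{101}$ is; for $\M_5$ and $\M_6$ one has $P_{101}=g$, so $g_2-F_2\equiv g_1-F_1\equiv 0$. But this must be said explicitly: as written, your reduction would certify as a QSP \emph{any} pair of functions solving (\ref{on}) and (\ref{q}) separately, including hypothetical ones with $g\not\equiv\tfrac14$ and $L\neq g$, for which the matrix need not satisfy (\ref{ok}) at all. (For what it is worth, the paper's own ``proof'' is just a summary of the preceding construction and is also silent on the $P_{001}$ and $P_{100}$ equations, so with this one correction your write-up would be more complete than the original.)
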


Take now the matrix $\mathbb Q^{[s,t]}=\left(\begin{array}{cc}
0& 0\\
1& 1\end{array}\right)$  then
\begin{equation}\label{gq}
g(s,t)=4g(s,\tau)g(\tau,t)-g(s,\tau).\end{equation}
It is easy to see that this equation has solution $g(s,t)\equiv 1/2$ (we do not know any other solution).
But this solution does not define a QSP, because from
$P_{000}^{[s,t]}+P_{010}^{[s,t]}=q^{[s,t]}_{00}=0$ it follows that $P_{000}^{[s,t]}=0\ne 1/2$.
%
%

\section{An example when Condition 1 is not satisfied}

In this section we consider an operation $a$ on $I=\{1,2, \dots, m\}$ which is not uniquely solvable.
Consider binary operation  $a(i,j)=\max\{i,j\}.$ It is not uniquely solvable, in general. Indeed, for $m\geq 2$, the equation
$\max\{x, m\}=m$ has many solutions: $x=1,2,\dots, m$.

Let $\sigma$ is a fixed stochasticity of cubic matrices then the QSP corresponding to $\max$ operation is denoted as type $(\sigma | \max)$.
Here we give some examples of such QSP.

For simplicity we take $m=2$ and solve the equation (\ref{KC}) for matrix $\M^{[s,t]}=\left(a_{ijk}^{[s,t]}\right)_{i,j,k=1}^{2}$.

In the case of multiplication corresponding to the binary operation  $a(i,j)=\max\{i,j\}$ the equation (\ref{KC}) is in the following form
\begin{equation}\label{kcm}
\left\lbrace
\begin{array}{llllllll}
a_{111}^{[s,t]}=a_{111}^{[s,\tau]}a_{111}^{[\tau,t]}+a_{112}^{[s,\tau]}a_{211}^{[\tau,t]}\\[2mm]
a_{112}^{[s,t]}=a_{111}^{[s,\tau]}a_{112}^{[\tau,t]}+a_{112}^{[s,\tau]}a_{212}^{[\tau,t]}\\[2mm]
a_{211}^{[s,t]}=a_{211}^{[s,\tau]}a_{111}^{[\tau,t]}+a_{212}^{[s,\tau]}a_{211}^{[\tau,t]}\\[2mm]
a_{212}^{[s,t]}=a_{211}^{[s,\tau]}a_{112}^{[\tau,t]}+a_{212}^{[s,\tau]}a_{212}^{[\tau,t]}\\[2mm]
a_{121}^{[s,t]}=a_{111}^{[s,\tau]}a_{121}^{[\tau,t]}+a_{112}^{[s,\tau]}a_{221}^{[\tau,t]}+
a_{122}^{[s,\tau]}a_{221}^{[\tau,t]}+a_{121}^{[s,\tau]}a_{111}^{[\tau,t]}+a_{121}^{[s,\tau]}a_{121}^{[\tau,t]}+a_{122}^{[s,\tau]}a_{211}^{[\tau,t]}\\[2mm]
a_{122}^{[s,t]}=a_{111}^{[s,\tau]}a_{122}^{[\tau,t]}+a_{121}^{[s,\tau]}a_{112}^{[\tau,t]}
+a_{121}^{[s,\tau]}a_{122}^{[\tau,t]}+a_{112}^{[s,\tau]}a_{222}^{[\tau,t]}+a_{122}^{[s,\tau]}a_{212}^{[\tau,t]}
+a_{122}^{[s,\tau]}a_{222}^{[\tau,t]}\\[2mm]
a_{221}^{[s,t]}=a_{211}^{[s,\tau]}a_{121}^{[\tau,t]}+a_{212}^{[s,\tau]}a_{221}^{[\tau,t]}+a_{221}^{[s,\tau]}a_{111}^{[\tau,t]}+a_{221}^{[s,\tau]}a_{121}^{[\tau,t]}+a_{222}^{[s,\tau]}a_{211}^{[\tau,t]}+a_{222}^{[s,\tau]}a_{221}^{[\tau,t]}\\[2mm]
a_{222}^{[s,t]}=a_{212}^{[s,\tau]}a_{222}^{[\tau,t]}+a_{221}^{[s,\tau]}a_{112}^{[\tau,t]}
+a_{221}^{[s,\tau]}a_{122}^{[\tau,t]}+a_{222}^{[s,\tau]}a_{212}^{[\tau,t]}+a_{222}^{[s,\tau]}a_{222}^{[\tau,t]}+a_{211}^{[s,\tau]}a_{122}^{[\tau,t]}
\end{array}\right.
\end{equation}

Denoting
\begin{equation}\label{b}
b_{ij}^{[s,t]}=a_{i1j}^{[s,t]}+a_{i2j}^{[s,t]}, \ \ B^{[s,t]}=\left(b_{ij}^{[s,t]}\right)
\end{equation} one can reduce the
system  (\ref{kcm}) to the following one

\begin{equation}\label{bb}
\left\lbrace
\begin{array}{llllllll}
b_{11}^{[s,t]}=b_{11}^{[s,\tau]}b_{11}^{[\tau,t]}+b_{12}^{[s,\tau]}b_{21}^{[\tau,t]}\\[2mm]
b_{12}^{[s,t]}=b_{11}^{[s,\tau]}b_{12}^{[\tau,t]}+b_{12}^{[s,\tau]}b_{22}^{[\tau,t]}\\[2mm]
b_{21}^{[s,t]}=b_{21}^{[s,\tau]}b_{11}^{[\tau,t]}+b_{22}^{[s,\tau]}b_{21}^{[\tau,t]}\\[2mm]
b_{22}^{[s,t]}=b_{21}^{[s,\tau]}b_{12}^{[\tau,t]}+b_{22}^{[s,\tau]}b_{22}^{[\tau,t]}.
\end{array}\right.
\end{equation}
Note that 1-4 equations of the system (\ref{kcm}) can be solved independently from 5-8 equations.
Therefore if we solve system of 1-4 equations of (\ref{kcm}) and solve  system (\ref{bb}) then by (\ref{b}) we can
find all unknown functions of (\ref{kcm}). Let us realize this argument.

Denote $c_{ij}^{[s,t]}=a_{i1j}^{[s,t]}$, $C^{[s,t]}=\left(c_{ij}^{[s,t]}\right)$ then 1-4 equations of the system (\ref{kcm}) is
\begin{equation}\label{cm}
\left\lbrace
\begin{array}{llllllll}
c_{11}^{[s,t]}=c_{11}^{[s,\tau]}c_{11}^{[\tau,t]}+c_{12}^{[s,\tau]}c_{21}^{[\tau,t]}\\[2mm]
c_{12}^{[s,t]}=c_{11}^{[s,\tau]}c_{12}^{[\tau,t]}+c_{12}^{[s,\tau]}c_{22}^{[\tau,t]}\\[2mm]
c_{21}^{[s,t]}=c_{21}^{[s,\tau]}c_{11}^{[\tau,t]}+c_{22}^{[s,\tau]}c_{21}^{[\tau,t]}\\[2mm]
c_{22}^{[s,t]}=c_{21}^{[s,\tau]}c_{12}^{[\tau,t]}+c_{22}^{[s,\tau]}c_{22}^{[\tau,t]}.
\end{array}\right.
\end{equation}

Both system of equations (\ref{bb}) and (\ref{cm}) are Kolmogorov-Chapman equations for square matrices.
 Using known solutions for these equations, (for example, $\mathbb Q_i$, $i=1,2,...,7$ introduced in the previous section) one can give
 concrete solutions of the system (\ref{kcm}).
 Namely, if $B^{[s,t]}=\left(b_{ij}^{[s,t]}\right)$ is a solution to (\ref{bb}) and $C^{[s,t]}=\left(c_{ij}^{[s,t]}\right)$ is a solution to (\ref{cm})
 then corresponding solution to the system (\ref{kcm}) is

 \begin{equation}\label{md}
\M_7^{[s,t]}= \begin{pmatrix}
c_{11}^{[s,t]} &c_{12}^{[s,t]}& \vline &c_{21}^{[s,t]} &c_{22}^{[s,t]}\\[3mm]
b^{[s,t]}_{11}-c_{11}^{[s,t]} &b^{[s,t]}_{12}-c_{12}^{[s,t]}& \vline &b^{[s,t]}_{21}-c_{21}^{[s,t]} &b^{[s,t]}_{22}-c_{22}^{[s,t]}
\end{pmatrix}.
\end{equation}
\begin{thm}\label{tm}  Let $B^{[s,t]}=\left(b_{ij}^{[s,t]}\right)$ be a solution to (\ref{bb}) and
 $C^{[s,t]}=\left(c_{ij}^{[s,t]}\right)$ be a solution to (\ref{cm}) with $c_{ij}^{[s,t]}\in [0,1]$
 and $b^{[s,t]}_{ij}-c_{ij}^{[s,t]}\in [0,1]$ for any $i,j=1,2$, $0\leq s<t$ then the family
 of matrices $\M_7^{[s,t]}$ given in (\ref{md}) is a QSP of type
\begin{itemize}
\item[-] $(12 | \max)$ iff $B^{[s,t]}$ is left stochastic for any $0\leq s<t$.

\item[-] $(13 | \max)$ iff $B^{[s,t]}$ (resp. $C^{[s,t]}$) with non negative elements with sum of all elements equals to 2 (resp. 1).

\item[-] $(23 | \max)$ iff $B^{[s,t]}$ is right stochastic for any $0\leq s<t$.

\item[-] $(1 | \max)$ iff $B^{[s,t]}$ (resp. $C^{[s,t]}$) with non negative elements with sum of all elements of each column equals to 2 (resp. left stochastic).

 \item[-] $(2 | \max)$ never.

\item[-] $(3 | \max)$ iff $B^{[s,t]}$ (resp. $C^{[s,t]}$) with non negative elements with sum of all elements of each row equals to 2 (resp. right stochastic).
\end{itemize}
\end{thm}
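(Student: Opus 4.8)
The plan is to split the verification into two independent tasks: first, that each member $\M_7^{[s,t]}$ of the family solves the Kolmogorov--Chapman system \eqref{kcm}; second, that each listed stochasticity of $\M_7^{[s,t]}$ is equivalent to the stated linear constraints on $B^{[s,t]}$ and $C^{[s,t]}$. Throughout I would use the dictionary imposed by \eqref{md} and \eqref{b}, namely
\[
a_{i1j}^{[s,t]}=c_{ij}^{[s,t]},\qquad a_{i2j}^{[s,t]}=b_{ij}^{[s,t]}-c_{ij}^{[s,t]},
\]
and I would record at the outset that the hypotheses $c_{ij}^{[s,t]}\in[0,1]$ and $b_{ij}^{[s,t]}-c_{ij}^{[s,t]}\in[0,1]$ make every entry of $\M_7^{[s,t]}$ nonnegative; hence in each case below the positivity requirement of $\sigma$-stochasticity is automatic and only the normalization (sum) equalities must be checked.

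For the KCE part, equations $1$--$4$ of \eqref{kcm} are exactly \eqref{cm} under $c_{ij}=a_{i1j}$, so they hold by assumption on $C$. For equations $5$--$8$ I would substitute $a_{i2j}=b_{ij}-c_{ij}$ into the left-hand side, replace $b$ by the right-hand side of the corresponding equation of \eqref{bb} and $c$ by the right-hand side of \eqref{cm}, and expand the products bilinearly: the pure-$c$ terms cancel against \eqref{cm}, while the remaining mixed and pure-difference terms reassemble termwise into the right-hand side of the relevant equation of \eqref{kcm}. Thus $B$ solving \eqref{bb} together with $C$ solving \eqref{cm} is precisely what makes $\M_7^{[s,t]}$ satisfy \eqref{KC}. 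This is the computational core, and it is already anticipated in the reduction preceding the statement.

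With the KCE in hand, each stochasticity type reduces to summing entries. Writing the defining sum of each type in the $a_{ijk}^{[s,t]}$ and re-expressing through the dictionary, I would obtain: for $(1,2)$ the conditions $b_{11}+b_{21}=1$, $b_{12}+b_{22}=1$ (columns of $B$ sum to $1$, i.e.\ $B$ left stochastic); for $(2,3)$ the conditions $b_{11}+b_{12}=b_{21}+b_{22}=1$ ($B$ right stochastic); for $(1,3)$ the two separate $j$-slice totals $\sum_{i,k}a_{i1k}=\sum_{i,j}c_{ij}=1$ and $\sum_{i,k}a_{i2k}=\sum_{i,j}b_{ij}-\sum_{i,j}c_{ij}=1$, i.e.\ the sum of $C$ equals $1$ and the sum of $B$ equals $2$. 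The single-index cases split finer: $1$-stochasticity forces $c_{11}+c_{21}=c_{12}+c_{22}=1$ (so $C$ is left stochastic) together with $b_{11}+b_{21}=b_{12}+b_{22}=2$ (each column of $B$ equal to $2$), and symmetrically $3$-stochasticity gives $C$ right stochastic with each row of $B$ equal to $2$.

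The only case needing an extra observation is $(2\mid\max)$: here $2$-stochasticity demands $a_{i1k}+a_{i2k}=b_{ik}=1$ for all $i,k$, i.e.\ $B^{[s,t]}\equiv\mathbf 1$; but substituting the all-ones matrix into the first equation of \eqref{bb} gives $1=1\cdot 1+1\cdot 1=2$, so no solution $B$ of \eqref{bb} can be identically $1$, and this type never occurs. I expect the main obstacle to be purely bookkeeping: keeping the index layout of \eqref{md} straight (which entry of $\M_7^{[s,t]}$ is which $a_{ijk}^{[s,t]}$) and noticing that, unlike the crude row-sum criterion of Lemma \ref{02}, the $(1,3)$-, $1$- and $3$-stochastic cases genuinely require the separate $j$-slice (resp.\ column, row) totals, which is what brings $C$ into the conditions alongside $B$.
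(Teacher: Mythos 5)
Your proposal is correct and takes essentially the same route as the paper: the KCE verification through the dictionary $a_{i1j}^{[s,t]}=c_{ij}^{[s,t]}$, $a_{i2j}^{[s,t]}=b_{ij}^{[s,t]}-c_{ij}^{[s,t]}$ is exactly the reduction the paper carries out in the discussion preceding the theorem, and your stochasticity equivalences (including the all-ones contradiction $1=1\cdot 1+1\cdot 1=2$ ruling out $(2\mid\max)$) coincide with the paper's proof, which simply says all types except $(2\mid\max)$ follow from the definitions. The only difference is that you spell out the bookkeeping the paper leaves implicit.
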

\begin{proof} All types (expect the type $(2 | \max)$) follow from the definitions of the corresponding
stochasticity. In the case  $(2 | \max)$ it is necessary that $b^{[s,t]}_{ij}\equiv 1$, but it is easy to see that
such quadratic matrix $B^{[s,t]}$ does not satisfy equation (\ref{bb}).
\end{proof}
\section{Dynamical systems of QSPs}

For QSPs generated by $\M^{[s,t]}_i$, $i=1,\dots,7$ using (\ref{ks}), \eqref{xt12}, let us give the time
behavior of the distribution $x^{(t)}=(x_0^{(t)}, x_1^{(t)})\in S^1$.
Fix $s\geq 0$
and by take a vector $x^{(s)}=(x_0^{(s)}, x_1^{(s)})\in S^1$.

{\bf Case $\M^{[s,t]}_1$ and  $\M^{[s,t]}_3$.}  By formula \eqref{xt12}
independently on the vector $x^{(s)}$, for \textsl{any} $t>s$,  we get
$$
x_0^{(t)} =\frac{1}{2}, \ \ x_1^{(t)} =\frac{1}{2}.
$$
Thus the time behavior of $x^{(t)}$ is clear: start process at time $s$ with an arbitrary initial
distribution vector $x^{(s)}$ then as soon as
the time $t$ turns on the distribution of the system goes to
the distribution $(1/2, 1/2)$ and this distribution remains stable
during all time $t>s$.

{\bf Case $\M^{[s,t]}_2$.}  By formula \eqref{xt12}, for fixed $s\geq 0$, given
vector $x^{(s)}$ and \textsl{any} $t>s$,  we get
\begin{align*}
x_0^{(t)}& =\left(\frac{1}{2}+{\Phi(t)\over 4\Phi(s)}\right)x_0^{(s)}+\left(\frac{1}{2}-{\Phi(t)\over 4\Phi(s)}\right)x_1^{(s)},\\[2mm]
x_1^{(t)}& =\left(\frac{1}{2}-{\Phi(t)\over 4\Phi(s)}\right)x_0^{(s)}+\left(\frac{1}{2}+{\Phi(t)\over 4\Phi(s)}\right)x_1^{(s)}.
\end{align*}
The time behavior of $x^{(t)}$ depends on function $\Phi$ (which by our assumption satisfies $-1/3\leq \Phi(t)/\Phi(s)\leq 1/3$).
If for example, $\Phi$ is such that
\begin{equation}\label{ub}\lim_{t-s\to\infty}{\Phi(t)\over 4\Phi(s)}=\omega, \ \ \mbox{with} \ \ \omega\in [-{1\over 12}, {1\over 12}].\end{equation}
Then
\begin{align*}
\lim_{t\to\infty}x_0^{(t)}& =\left(\frac{1}{2}+\omega\right)x_0^{(s)}+\left(\frac{1}{2}-\omega\right)x_1^{(s)},\\[2mm]
\lim_{t\to\infty}x_1^{(t)}& =\left(\frac{1}{2}-\omega\right)x_0^{(s)}+\left(\frac{1}{2}+\omega\right)x_1^{(s)}.
\end{align*}
In case when the limit (\ref{ub}) does not exists then limit of $x^{(t)}$ does not exist too.

{\bf Case $\M^{[s,t]}_4$.}  In this case  we have
\begin{align*}
x_0^{(t)}& =\frac{1}{2}x_0^{(s)}+\left(\frac{1}{2}+{\psi(t)\over 4\psi(s)}\right)x_1^{(s)},\\[2mm]
x_1^{(t)}& =\frac{1}{2}x_0^{(s)}+\left(\frac{1}{2}-{\psi(t)\over 4\psi(s)}\right)x_1^{(s)}.
\end{align*}
As previous case, the time behavior of $x^{(t)}$ depends on function $\phi$ (which by our assumption satisfies $-1/2\leq \phi(t)/\phi(s)\leq 1/2$).

{\bf Case $\M^{[s,t]}_5$.}  In this case independently on the initial state vector $x^{(s)}$ we obtain
\begin{align*}
x_0^{(t)}& =\frac{1}{2}+{\varphi(t)\over 4\varphi(s)},\\[2mm]
x_1^{(t)}& =\frac{1}{2}-{\varphi(t)\over 4\varphi(s)}.
\end{align*}
This is an interesting dynamical system, because at each initial (fixed) time $s$ the system does not depend on the initial state $x^{(s)}$ of the system.
  The trajectory only depends on the initial time itself and the time behavior of $x^{(t)}$ depends on function $\varphi$ (which by our assumption satisfies $-1\leq \varphi(t)/\varphi(s)\leq 1$).

  {\bf Case $\M^{[s,t]}_6$.}  In this case independently on the initial state vector $x^{(s)}$ we obtain
$$
x_0^{(t)} =1-x_1^{(t)}=\left\{\begin{array}{ll}
\frac{3}{4}, \ \ \mbox{if} \ \ 0\leq s<t<a\\[2mm]
\frac{1}{2}, \ \ \mbox{if} \ \ t\geq a.
\end{array}\right.$$
Thus we get a discontinuous (with respect to time) dynamical system, the trajectory has limit $1/2$.

{\bf Case $\M^{[s,t]}_7$.} Consider QSP of type $(3 | \max)$ (other cases can be considered similarly). By (\ref{ks}) and Theorem \ref{tm}
we get
  \begin{align*}
x_0^{(t)}& =c_{11}^{[s,t]}(x_0^{(s)})^2+\left(b_{11}^{[s,t]}-c_{11}^{[s,t]}+c_{21}^{[s,t]}\right)x_0^{(s)}x_1^{(s)}
+\left(b_{21}^{[s,t]}-c_{21}^{[s,t]}\right)(x_1^{(s)})^2,\\[2mm]
x_1^{(t)}& =c_{12}^{[s,t]}(x_0^{(s)})^2+\left(b_{12}^{[s,t]}-c_{12}^{[s,t]}+c_{22}^{[s,t]}\right)x_0^{(s)}x_1^{(s)}
+\left(b_{22}^{[s,t]}-c_{22}^{[s,t]}\right)(x_1^{(s)})^2.
\end{align*}

   This is a quadratic continuous time dynamical system. The behavior of $x^{(t)}$ depends on the matrix $\M^{[s,t]}_7$.
   One can choose this matrix to make the behavior of the dynamical system as reach as needed
   (see \cite{GMR}, \cite{J}, \cite{ly}, \cite{MG} for some examples of quadratic dynamical systems and their applications).
%

\end{document}